\numberwithin{equation}{section}
\theoremstyle{plain}
\newtheorem{theorem}{Theorem}[section]
\newtheorem{proposition}[theorem]{Proposition}
\newtheorem{lemma}[theorem]{Lemma}
\newtheorem{corollary}[theorem]{Corollary}
\theoremstyle{definition}
\newtheorem{defn}[theorem]{Definition}
\newtheorem{example}[theorem]{Example}
\newtheorem{remark}[theorem]{Remark}
\newcommand{\comment}[1]{}
\def\Hom{\textup{Hom}^+}
\def\Ind{\textup{Ind}}
\def\Aut{\textup{Aut}^+}
\def\cS{{\mathcal{S}}}
\newcommand{\rt}{\rtimes}
\newcommand{\R}{{\mathbb R}}
\newcommand{\Z}{{\mathbb Z}}
\newcommand{\N}{{\mathbb N}}
\newcommand{\supp}{{\operatorname {supp}\,}}
\newcommand{\Rn}{(\R_{>0})^n}
\newcommand{\AutRn}{\Aut(\R^n)}
\newcommand{\id}{\textup{id}}
\begin{document}

\title[]
{{Positive representations of finite groups in Riesz spaces}}
\author{Marcel de Jeu}
\email{mdejeu@math.leidenuniv.nl}
\address{Mathematical Institute,
Leiden University,
P.O. Box 9512,
2300 RA Leiden,
The Netherlands}
\author{Marten Wortel}
\email{marten.wortel@gmail.com}
\address{Mathematical Institute,
Leiden University,
P.O. Box 9512,
2300 RA Leiden,
The Netherlands}
\subjclass[2010]{Primary 20C05; Secondary 22D12, 46A40 }
\keywords{Finite group, positive representation, order indecomposable representation, irreducible representation, vector lattice, Riesz space, Banach lattice}

\begin{abstract}
In this paper, which is part of a study of positive representations of locally compact groups in Banach lattices, we initiate the theory of positive representations of finite groups in Riesz spaces. If such a representation has only the zero subspace and possibly the space itself as invariant principal bands, then the space is Archimedean and finite dimensional. Various notions of irreducibility of a positive representation are introduced and, for a finite group acting positively in a space with sufficiently many projections, these are shown to be equal. We describe the finite dimensional positive Archimedean representations of a finite group and establish that, up to order equivalence, these are order direct sums, with unique multiplicities, of the order indecomposable positive representations naturally associated with transitive $G$-spaces. Character theory is shown to break down for positive representations. Induction and systems of imprimitivity are introduced in an ordered context, where the multiplicity formulation of Frobenius reciprocity turns out not to hold.

\end{abstract}

\maketitle

\section{Introduction and overview}

The theory of unitary group representations is well developed. Apart from its intrinsic appeal, it has been stimulated in its early days by the wish, originating from quantum theory, to understand the natural representations of symmetry groups of physical systems in $L^2$-spaces. Such symmetry groups do not only yield natural unitary representations, but they have natural representations in other Banach spaces as well. For example, the orthogonal group in three dimensions does not only act on the $L^2$-functions on the sphere or on three dimensional space. It also has a natural isometric action on $L^p$-functions, for all $p$, and this action is strongly continuous for finite $p$. Moreover, this action is obviously positive. Thus, for finite $p$, these $L^p$-spaces, which are Banach lattices, afford a strongly continuous isometric positive representation of the orthogonal group. It is rather easy to find more examples of positive representations: whenever a group acts on a point set, then, more often than not, there is a natural positive action on various naturally associated Banach lattices of functions.

However, in spite of the plenitude of examples of strongly continuous (isometric) positive representations of groups in Banach lattices, the theory of such representations cannot compare to its unitary counterpart. Very little seems to be known. Is there, for example, a decomposition theory into indecomposables for such representations, as a positive counterpart to that for unitary representations described in, e.g., \cite{dixmier}?  When asking such an---admittedly ambitious---question, it is important to keep in mind that the unitary theory works particularly well in separable Hilbert spaces, i.e., for representations which can all be realized in just one space: $\ell^2$. Since there is a great diversity of Banach lattices, it is not clear at the time of writing whether one can expect a general answer for all these lattices with a degree of sophistication and uniformity comparable to that for the unitary representations in this single Hilbert space $\ell^2$. It may be more feasible, at least for the moment, to aim at a better understanding of positive representations on specific classes of Banach lattices. For example, the results in \cite{dejeurozendaal} show that, in the context of a Polish group acting on a Polish space with an invariant measure, it is indeed possible to decompose---in terms of Banach bundles rather than in terms of direct integrals as in the unitary case---the corresponding isometric positive representation in $L^p$-spaces ($1\leq p<\infty)$ into indecomposable isometric positive representations. To our knowledge, this is the only available decomposition result at this time. Since this result covers only representations originating from an action on the underlying point set, we still cannot decide whether a general (isometric) positive representation of a (Polish) group in such Banach lattices can be decomposed into indecomposable positive representations, and more research is necessary to decide this. This, however, is already a relatively advanced issue: as will become clear below, it is easy to ask very natural basic questions about positive representations of locally compact groups in Banach lattices which need answering. This paper, then, is a contribution to the theory of such representations, with a hoped-for decomposition theory into indecomposable positive representations in mind as a leading and focusing theme, and starting with the obviously easiest case: the finite groups. We will now globally discuss it contents.

If a group $G$ acts as positive operators on a Banach lattice $E$, then the natural question is to ask whether it is possible to decompose $E=L\oplus M$ as a $G$-invariant order direct sum. In that case, $L$ and $M$ are automatically projection bands and each other's disjoint complement. Since bands in a Banach lattice are closed, the decomposition is then automatically also topological, and the original representations splits as an order direct sum of the positive subrepresentations on the Banach lattices $L$ and $M$. If such a decomposition is only trivially possible, then we will call the representation (order) indecomposable, a terminology already tacitly used above. Is it then true that every indecomposable positive representation of a finite group $G$ in a Banach lattice is finite dimensional, as it is for unitary representations? This is not the case: the trivial group acting on $C([0,1])$, which has only trivial projection bands, provides a counterexample. Is it then perhaps true when we narrow down the class of spaces to better behaved ones, and ask the same question for Banach lattices with the projection property? After all, since bands are now complemented by their disjoint complement, they seem close to Hilbert spaces where the proof for the corresponding statement in the unitary case is a triviality, and based on this complementation property. Indeed, if $x\neq 0$ is an element of the Hilbert space under consideration where the finite group $G$ acts unitarily and irreducibly, then the orbit $G\cdot x$ spans a finite dimensional, hence closed, nonzero subspace which is clearly invariant and invariantly complemented. Hence the orbit spans the space, which must be finite dimensional. In an ordered context this proof breaks down. Surely, one can consider the band generated by the orbit of a nonzero element, which is invariantly complemented in an order direct sum if the space is assumed to have the projection property. Hence this band is equal to the space, but since there is no guarantee that it is finite dimensional, once cannot reach the desired conclusion along these lines. We have not been able to find an answer to this finite dimensionality question for Banach lattices with the projection property in the literature, nor could a number of experts in positivity we consulted provide an answer. The best available result in this vein seems to be \cite[Theorem~III.10.4]{schaefer}, which implies as a special case that a positive representation of a finite group in a Banach lattice with only trivial invariant closed ideals is finite dimensional. Still, this does not answer our question concerning the finite dimensionality of indecomposable positive representations of a finite group in a Banach lattice with the projection property. The reason is simple: unless one assumes that the lattice has order continuous norm, one cannot conclude that there are only trivial invariant closed ideals from the fact that there are only trivial invariant bands. On the other hand: there are no obvious infinite dimensional counterexamples in sight, and one might start to suspect that there are none. This is in fact the case, and even more holds true: a positive representation of a finite group in a Riesz space, with the property that the only invariant principal bands are $\{0\}$ and possibly the space itself, is in a finite dimensional Archimedean space, cf.\ Theorem~\ref{t:principal_band_irreducible_finite_dim} below. As will have become obvious from the previous discussion, such a result is no longer a triviality in an ordered context. It provides an affirmative answer to our original question because, for Banach lattices with the projection property, an invariant principal band is an invariant projection band. It also implies the aforementioned result that a positive representation of a finite group in a Banach lattice with only trivial invariant closed ideals is finite dimensional. Indeed, an invariant principal band is then an invariant closed ideal.

Thus, even though our original question was in terms of Banach lattices, and motivated by analytical unitary analogies, an answer can be provided in a more general, topology free context. For finite groups, this is---after the fact---perhaps not too big a surprise. Furthermore, we note that the hypothesis in this finite dimensionality theorem is not the triviality of all invariant order decompositions, but rather the absence of a nontrivial $G$-invariant object, without any reference to this being invariantly complemented in an order direct sum or not. It thus becomes clear that it is worthwhile to not only consider the existence of nontrivial invariant projection bands (which is the same as the representation being (order) decomposable), but to also consider the existence of nontrivial invariant ideals, nontrivial invariant bands, etc., for positive representations in arbitrary Riesz spaces, and investigate the interrelations between the corresponding notions of irreducibility. In the unitary case, indecomposability and irreducibility (for which there is only one reasonable notion) coincide, but in the present ordered context this need not be so. Nevertheless, for finite groups acting positively in spaces with sufficiently many projections, the most natural of these notions of irreducibility are all identical and coincide with (order) indecomposability, cf.\ Theorem~\ref{t:equivalencies}. Again, whereas the corresponding proof for the unitary case is a triviality, this is not quite so obvious in an ordered context.

As will become apparent in this paper, it is possible to describe all finite dimensional positive Archimedean representations of a finite group, indecomposable or not. Once this is done, it is not too difficult to show that such finite dimensional positive representations can be decomposed uniquely into indecomposable positive representations, and that, up to order equivalence, all such indecomposable positive representations arise from actions of the group on transitive $G$-spaces, cf.\ Corollary~\ref{c:decomposition_with_multiplicities} and Theorem~\ref{t:order_dual}. Since this decomposition into irreducible positive representations with multiplicities is so reminiscent of classical linear representation theory theory for finite groups, and to Peter-Weyl theory for compact groups, one might wonder whether parts of character theory also survive. This is hardly the case. For finite groups with only normal subgroups, such as finite abelian groups, there is still a bijection between characters and order equivalence classes of finite dimensional indecomposable positive Archimedean representations, cf.\ Corollary~\ref{c:dedekindgroups}, but we provide counterexamples to a number of other results as they would be natural to conjecture.

Finally, we consider induction and systems of imprimitivity in an ordered context. As long as topology is not an issue, this can be done from a categorical point of view for arbitrary groups and arbitrary subgroups. Even though the constructions are fairly routine, we have included the material, not only as a preparation for future more analytical considerations, but also because there are still some differences with the linear theory. For example, Frobenius reciprocity no longer holds in its multiplicity formulation.

After this global overview we emphasize that, even though this paper contains a basic finite dimensionality result and provides reasonably complete results for finite dimensional positive Archimedean representations of finite groups (in analytical terms: for positive representations of finite groups in $C(K)$ for $K$ finite), the basic decomposition issue for positive representations of a finite group in infinite dimensional Banach lattices is still open. At the time of writing, the only results in this direction seem to be the specialization to finite groups of the results in \cite{dejeurozendaal} for $L^p$-spaces, and of those in \cite{dejeumesserschmidt}, which is concerned with Jordan-H\"older theory for finite chains of various invariant order structures in Riesz spaces. As long as the group is only finite, a more comprehensive answer seems desirable.

\medskip

The structure of this paper is as follows.

In Section~\ref{s:Preliminaries} we introduce the necessary notation and definitions, and we recall a folklore result on transitive $G$-spaces. Then, in Section~\ref{s:Irreducible and indecomposable representations}, we investigate the relations between various notions of irreducibility as already mentioned above. We then establish one of the main results of this paper, Theorem~\ref{t:principal_band_irreducible_finite_dim}, stating, amongst others, that a principal band irreducible positive representation of a finite group is always finite dimensional. The proof is by induction on the order of the group, and uses a reasonable amount of general basic theory of Riesz spaces.
We then continue by examining the structure of finite dimensional positive Archimedean representations of a finite group in Section~\ref{s:Structure of finite dimensional representations}. Any such space is lattice isomorphic to $\R^n$, for some $n$, and its group $\Aut(\R^n)$ of lattice automorphisms is a semidirect product of $S_n$ and the group of multiplication (diagonal) operators, a result which also follows from \cite[Theorem~3.2.10]{meyernieberg}, but which we prefer to derive by elementary means in our context. Armed with this information we can completely determine the structure of a positive representations of a finite group in $\R^n$ in Theorem~\ref{t:characterization_rep_into_Rn}: such a positive representation is given by a representation into $S_n \subset \Aut(\R^n)$, called a permutation representation, and a single multiplication operator. We then determine when two positive representations in $\R^n$ are order equivalent, which turns out to be the case precisely when their permutation parts are conjugate. Consequently, in the end, the finite dimensional positive Archimedean representations of a finite group can be described in terms of actions of the group on finite sets. The decomposition result and the description of indecomposable positive representations already mentioned above then follow easily.  The rest of Section~\ref{s:Structure of finite dimensional representations} is concerned with showing that character theory does not survive in an ordered context. Finally, in Section~\ref{s:Induction}, we develop the theory of induction and systems of imprimitivity in the ordered setting, and show that Frobenius reciprocity does not hold in its multiplicity formulation.

\section{Preliminaries}\label{s:Preliminaries}

In this section we will discuss some preliminaries about automorphisms of Riesz spaces, representations, order direct sums of representations and $G$-spaces. All Riesz spaces in this paper are real. For positive representations in spaces admitting a complexification it is easy, and left to the reader, to formulate the corresponding complex result and derive it from the real case.

Let $E$ be a not necessarily Archimedean Riesz space. If $D \subset E$ is any subset, then the band generated by $D$ is denoted by $\{D\}$, and the disjoint complement of $D$ is denoted by $D^d$. If $T$ is a lattice automorphism of $E$, then $\{TD\} = T\{D\}$ and $T(D^d) = (TD)^d$. The group of lattice automorphisms of $E$ is denoted by $\Aut(E)$.

In this paper $\R^n$ is always equipped with the coordinatewise ordering.

\begin{defn}
 Let $G$ be a group and $E$ a Riesz space. A \emph{positive representation} of $G$ in $E$ is a group homomorphism $\rho \colon G \to \Aut(E)$.
\end{defn}
For typographical reasons, we will write $\rho_s$ instead of $\rho(s)$, for $s \in G$.

If $(E_i)_{i \in I}$ is a collection of Riesz spaces, then the \emph{order direct sum} of this collection, denoted $\oplus_{i \in I} E_i$, is the Riesz space with elements $(x_i)_{i \in I}$, where $x_i \in E_i$ for all $i \in I$, at most finitely many $x_i$ are nonzero, and where $(x_i)_{i \in I}$ is positive if and only if $x_i$ is positive for all $i \in I$. If additionally $\rho^i \colon G \to \Aut(E_i)$ is a positive representation for all $i \in I$, then the positive representation
\[ \bigoplus_{i \in I} \rho^i \colon \to \Aut\left(\bigoplus_{i \in I} E_i \right), \]
the \emph{order direct sum} of the $\rho^i$, is defined by $(\oplus_{i \in I} \rho^i)_s := \oplus_{i \in I} \rho^i_s$, for $s \in G$.

Let $\rho \colon G \to \Aut(E)$ be a positive representation, and suppose that $E = L \oplus M$ as an order direct sum. Then $L$ and $M$ are automatically projection bands with $L^d = M$ by \cite[Theorem~24.3]{riesz1}. If both $L$ and $M$ are $\rho$-invariant, then $\rho$ can be viewed as the order direct sum of $\rho$ acting positively on $L$ and $M$.

If $\rho \colon G \to \Aut(E)$ and $\theta \colon G \to \Aut(F)$ are positive representations on Riesz spaces $E$ and $F$, respectively, then a positive map $T \colon E \to F$ is called a \emph{positive intertwiner} between $\rho$ and $\theta$ if $T \rho_s = \theta_s T$ for all $s \in G$, and $\rho$ and $\theta$ are called \emph{order equivalent} if there exists a positive intertwiner between $\rho$ and $\theta$ which is a lattice isomorphism.

\medskip

Turning to the terminology for $G$-spaces, we let $G$ be a not necessarily finite group. A $G$-space $X$ is a nonempty set $X$ equipped with an action of $G$; it is called transitive if there is only one orbit. For $x \in X$, let $G_x$ denote the subgroup $\{s \in G: sx = x\}$, the stabilizer of $x$. If $X$ and $Y$ are $G$-spaces, then $X$ and $Y$ are called isomorphic $G$-spaces if there is a bijection $\phi \colon X \to Y$, such that $s \phi(x) = \phi(sx)$ for all $s \in G$ and $x \in X$. We let $[X]$ denote the class of all $G$-spaces isomorphic to $X$.

If $X$ is a transitive $G$-space and $x \in X$, then $sx \mapsto sG_x$ is a $G$-space isomorphism between $X$ and $G / G_x$ with its natural $G$-action. The next folklore lemma elaborates on this correspondence.

\begin{lemma}\label{t:characterization_transitive_actions}
Let $G$ be a not necessarily finite group. For each isomorphism class $[X]$ of transitive $G$-spaces, choose a representative $X$ and $x \in X$. Then the conjugacy class $[G_x]$ of $G_x$ does not depend on the choices made, and the map $[X] \mapsto [G_x]$ is a bijection between the isomorphism classes of transitive $G$-spaces and the conjugacy classes of subgroups of $G$.
\end{lemma}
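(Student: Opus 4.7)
The plan is to verify well-definedness of $[X]\mapsto[G_x]$ first, then check surjectivity and injectivity, exploiting the remark already stated before the lemma that any transitive $G$-space $X$ with distinguished point $x$ is isomorphic as a $G$-space to $G/G_x$ via $sx\mapsto sG_x$.

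For well-definedness, I would check two independences. First, if $x,y$ both lie in a fixed transitive $G$-space $X$, then $y=sx$ for some $s\in G$ by transitivity, and a direct computation gives $G_y=G_{sx}=sG_xs^{-1}$, so $[G_x]=[G_y]$. Second, if $\phi\colon X\to Y$ is a $G$-space isomorphism and $x\in X$, then $G_{\phi(x)}=G_x$, because $s\phi(x)=\phi(x)$ iff $\phi(sx)=\phi(x)$ iff $sx=x$. Combining these two independence statements shows that $[G_x]$ depends neither on the choice of representative $X$ in its isomorphism class nor on the choice of $x\in X$.

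For surjectivity, given any subgroup $H\leq G$, the coset space $G/H$ with the natural left $G$-action is a transitive $G$-space, and the stabilizer of the coset $eH$ is exactly $H$. Hence the conjugacy class $[H]$ is in the image of the map.

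For injectivity, suppose $X$ and $Y$ are transitive $G$-spaces with chosen base points $x\in X$ and $y\in Y$ such that $[G_x]=[G_y]$, say $G_y=sG_xs^{-1}$ for some $s\in G$. By the first independence observation applied inside $X$, we have $G_{sx}=sG_xs^{-1}=G_y$, so after replacing $x$ by $sx$ we may assume $G_x=G_y$ outright. The folklore isomorphism recalled just before the lemma gives $G$-space isomorphisms $X\cong G/G_x$ and $Y\cong G/G_y=G/G_x$, so $X\cong Y$ as $G$-spaces, i.e.\ $[X]=[Y]$.

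The only mildly subtle point is the injectivity step, where one has to first conjugate the base point inside $X$ in order to align the two stabilizers before invoking the $X\cong G/G_x$ identification; everything else is an immediate unwinding of definitions. I do not anticipate any real obstacle.
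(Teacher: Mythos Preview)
Your proof is correct and follows essentially the same route as the paper, which likewise dismisses well-definedness and surjectivity as easy and focuses on injectivity via the identification $X\cong G/G_x$. The only cosmetic difference is that the paper writes down the explicit $G$-isomorphism $sG_x\mapsto srG_y$ between $G/G_x$ and $G/G_y$ (where $G_x=rG_yr^{-1}$), whereas you absorb this right translation into a base-point shift $x\mapsto sx$ before invoking the identification; these are the same argument in different packaging.
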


\begin{proof}
It is easy to see that the map is well-defined and surjective. For injectivity, let $X$ and $Y$ be transitive $G$-spaces, such that $[G_x] = [G_y]$ for some $x \in X$ and $y \in Y$. We have to show that $[X] = [Y]$, or equivalently, $G / G_x \cong G / G_y$. By assumption $G_x = r G_y r^{-1}$ for some $r \in G$, and the map $s G_x \mapsto s G_x r = sr G_y$ is then an isomorphism of $G$-spaces between $G / G_x$ and $G / G_y$.
\end{proof}

\section{Irreducible and indecomposable representations}\label{s:Irreducible and indecomposable representations}

In the theory of unitary representations of groups, the nonexistence of nontrivial closed invariant subspaces is the only reasonable notion of irreducibility of a representation, and it coincides with the natural notion of indecomposability of a representation. In a purely linear context, irreducibility and indecomposability of group representations need not coincide, however, and the same is true in an ordered context where, in addition, there are several natural notions of irreducibility. In this section, we are concerned with the relations between the various notions and we establish a basic finite dimensionality result, Theorem~\ref{t:principal_band_irreducible_finite_dim}. This is then used to show that, after the fact, the various notions of irreducibility are equivalent for finite groups if the space has sufficiently many projections, cf.\ Theorem~\ref{t:equivalencies}. We let $G$ be a group, to begin with not necessarily finite.

\begin{defn}
 A positive representation $\rho \colon G \to \Aut(E)$ is called \emph{band irreducible} if a $\rho$-invariant band equals $\{0\}$ or $E$. Projection band irreducibility, ideal irreducibility, and principal band irreducibility are defined similarly, as are closed ideal irreducibility, etc., in the case of normed Riesz spaces.
\end{defn}

Starting our discussion of the implications between the various notions of irreducibility, we note that, obviously, band irreducibility implies projection band irreducibility. If $E$ has the projection property, then the converse holds trivially as well, since all bands are projection bands by definition, but the next example shows that this converse fails in general.

\begin{example}\label{e:ex1}
Consider the representation of the trivial group on $C[0,1]$. Now every band is invariant, so this representation is not band irreducible, but $C[0,1]$ does not have any nontrivial projection bands, and therefore it is projection band irreducible.
\end{example}

For positive representations on Banach lattices, closed ideal irreducibility implies band irreducibility. If the Banach lattice  has order-continuous norm, then the converse holds as well, since then all closed ideals are bands (\cite[Corollary~2.4.4]{meyernieberg}), but once again this converse fails in general, as the next example shows.

\begin{example}
Consider $\ell^\infty(\Z)$, the space of doubly infinite bounded sequences, and define $\rho \colon \Z \to \Aut(\ell^\infty(\Z))$ by $\rho_k (x_n) := (x_{n-k})$, the left regular representation. This representation is not closed ideal irreducible, since the space $c_0(\Z)$ of sequences tending to zero is an invariant closed ideal. On the other hand, it is easy to see that any nonzero invariant ideal must contain the order dense subspace of finitely supported sequences, therefore $\rho$ is band irreducible.
\end{example}

Finally, for positive representations on Banach lattices, ideal irreducibility obviously implies closed ideal irreducibility, but again there is an example showing that the converse fails in general.

\begin{example}
 Consider the left regular representation of $\Z$ on $c_0(\Z)$, as in the above example. Then $\ell^1(\Z)$ is an invariant ideal, so this positive representation is not ideal irreducible, but it is closed ideal irreducible since every nonzero invariant ideal must contain the norm dense subspace of finitely supported sequences.
\end{example}

We continue by defining the natural notion of indecomposability for positive representations, which is order indecomposability. As usual, the order direct sum $E = L \oplus M$ is called nontrivial if $L \not= 0$ and $L \not= E$.

\begin{defn}
 A positive representation $\rho \colon G \to \Aut(E)$ is called \emph{order indecomposable} if there are no nontrivial $\rho$-invariant order direct sums $E = L \oplus M$.
\end{defn}

We will now investigate the conditional equivalence between order indecomposability and the various notions of irreducibility.

\begin{lemma}\label{l:projband_equiv_order_irr}
 A positive representation $\rho \colon G \to \Aut(E)$ is order indecomposable if and only if it is projection band irreducible.
\end{lemma}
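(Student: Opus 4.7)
The plan is to observe that both directions of the equivalence follow almost directly from the general facts about Riesz spaces recalled in Section~\ref{s:Preliminaries}, so the proof will be short and mostly a matter of unwinding definitions.

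For the direction from order indecomposability to projection band irreducibility, I would argue by contraposition. Suppose $B$ is a nontrivial $\rho$-invariant projection band in $E$. Then $E = B \oplus B^d$ as an order direct sum, and since each $\rho_s$ is a lattice automorphism, the identity $\rho_s(B^d) = (\rho_s B)^d$ (noted in the preliminaries) shows that $B^d$ is $\rho$-invariant as well. This is a nontrivial invariant order direct sum decomposition of $E$, contradicting order indecomposability.

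For the converse direction, suppose $E = L \oplus M$ is a nontrivial $\rho$-invariant order direct sum. Then, as recalled in the preliminaries using \cite[Theorem~24.3]{riesz1}, $L$ is automatically a projection band, and it is nontrivial and $\rho$-invariant by hypothesis, so $\rho$ is not projection band irreducible.

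I do not anticipate any real obstacle here: the whole content is packaged in the two facts that the disjoint complement of an invariant set under a lattice automorphism is again invariant, and that order direct summands are projection bands. The only care needed is to make sure the invariance of the complement is cited cleanly in the first direction.
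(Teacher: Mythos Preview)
Your proposal is correct and follows essentially the same approach as the paper's proof. The only cosmetic difference is that for the invariance of $B^d$ you invoke the identity $\rho_s(B^d) = (\rho_s B)^d$ from the preliminaries directly, whereas the paper reproves this elementwise; your version is arguably cleaner.
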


\begin{proof}
 Suppose $\rho$ is order indecomposable, and let $B$ be a $\rho$-invariant projection band. We claim that $B^d$ is $\rho$-invariant. For this, let $x \in (B^d)^+$ and $s \in G$. Then $(\rho_s x) \wedge y = \rho_s (x \wedge \rho_s^{-1} y) = \rho_s 0 = 0$ for all $y \in B^+$, so $\rho_s x \bot B$, i.e., $\rho_s x \in B^d$. Hence $E = B \oplus B^d$ is a $\rho$-invariant order direct sum, so either $B = 0$ or $B = E$.

Conversely, suppose that $\rho$ is projection band irreducible. Let $E = L \oplus M$ be a $\rho$-invariant order direct sum. Then, as mentioned in the preliminaries, $L$ and $M$ are projection bands, and therefore $L = 0$ or $M = 0$.
\end{proof}

Thus order indecomposability is equivalent with projection band irreducibility. We have already seen that the latter property is, in general, not equivalent with band irreducibility, but that equivalence between these two does hold (trivially) if the Riesz space has the projection property. However, if the group is finite, we will see in Lemma~\ref{l:projband_equiv_band_irr} and Theorem~\ref{t:equivalencies} below that these three notion are equivalent under a much milder assumption on the space, as in the following definition.

\begin{defn}
 A Riesz space $E$ is said to have \emph{sufficiently many projections} if every nonzero band contains a nonzero projection band.
\end{defn}

This notion is intermediate between the principal projection property and the Archimedean property, cf. \cite[Theorem~30.4]{riesz1}. In order to show that it is (in particular) actually weaker than the projection property, which is the relevant feature for our discussion, we present an example of a Banach lattice which has sufficiently many projections, but not the projection property.

\begin{example}
Let $\Delta \subset [0,1]$ be the Cantor set, and let $E = C(\Delta)$. Then \cite[Corollary~2.1.10]{meyernieberg} shows that bands correspond to (all functions vanishing on the complement of) regularly open sets, i.e., to open sets which equal the interior of their closure, and projection bands correspond to clopen sets. The Cantor set has a basis of clopen sets, so that, in particular, every nonempty regularly open set contains a nonempty clopen set. Therefore $C(\Delta)$ has sufficiently many projections. It does not have the projection property, since $[0,1/4) \cap \Delta \subset \Delta$ is regularly open but not closed (\cite[29.7]{counterexamples}).
\end{example}

\begin{lemma}\label{l:projband_equiv_band_irr}
 Let $G$ be a finite group, $E$ a Riesz space with sufficiently many projections, and $\rho \colon G \to \Aut(E)$ a positive representation. Then the following are equivalent:
\begin{enumerate}
 \item $\rho$ is order indecomposable;
 \item $\rho$ is projection band irreducible;
 \item $\rho$ is band irreducible.
\end{enumerate}
\end{lemma}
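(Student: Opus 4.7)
The equivalence (i)$\iff$(ii) is already handed to us by Lemma~\ref{l:projband_equiv_order_irr}, and (iii)$\implies$(ii) is immediate because every projection band is in particular a band. So the plan is to establish the only remaining implication: (ii)$\implies$(iii). Note that up to this point neither the finiteness of $G$ nor the hypothesis on $E$ has been used; it is precisely here that both will enter.

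Assume (ii), and let $B$ be a nonzero $\rho$-invariant band. The goal is to show $B=E$. Since $E$ has sufficiently many projections, $B$ contains a nonzero projection band $P$. Now form
\[
Q := \sum_{s \in G}\rho_s P.
\]
Because $\rho_s$ is a lattice automorphism, it preserves the property of being a projection band, so each $\rho_s P$ is a projection band. Since $G$ is finite, $Q$ is a finite sum of projection bands, and I would invoke (or, if the paper has not recorded it explicitly, verify in one line from definitions) the standard fact that the collection of projection bands in an arbitrary Riesz space is stable under finite sums---equivalently, forms a Boolean algebra with $B_1\vee B_2 = B_1+B_2$ and complement $B\mapsto B^d$. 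Thus $Q$ is a projection band. It is manifestly $\rho$-invariant, nonzero (as it contains $P$), and, since $B$ is an invariant band (hence an ideal closed under finite sums) containing $P$, it satisfies $Q\subseteq B$. By (ii), $Q=E$, whence $B=E$, as desired.

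The only point that requires any care is the stability of the class of projection bands under finite sums in a not necessarily Archimedean Riesz space; this is the potential obstacle. Once that lemma is in hand, the argument is exactly the orbit-averaging trick adapted to the lattice setting: the role played in the unitary/Hilbert case by ``finite-dimensional span of an orbit'' is here played by ``finite sum of $G$-translates of a projection band,'' with the hypothesis of sufficiently many projections ensuring that a nonzero invariant band is large enough to contain such a seed $P$ to start the argument.
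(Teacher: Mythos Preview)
Your proof is correct and matches the paper's argument essentially verbatim: the paper also reduces to $(ii)\Rightarrow(iii)$, picks a nonzero projection band inside the given invariant band via the ``sufficiently many projections'' hypothesis, and shows that the finite sum $\sum_{s\in G}\rho_s P$ is an invariant projection band contained in $B$. For the one point you flag---stability of projection bands under finite sums---the paper simply cites \cite[Theorem~30.1(ii)]{riesz1}, which indeed holds in arbitrary (not necessarily Archimedean) Riesz spaces.
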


\begin{proof}
$(iii) \Rightarrow (ii)$ is immediate. For $(ii) \Rightarrow (iii)$, suppose $\rho$ is projection band irreducible. Let $B_0$ be a nonzero $\rho$-invariant band. Let $0 \not= B \subset B_0$ be a projection band. Then $\sum_{s \in G} \rho_s B$ is a projection band by \cite[Theorem~30.1(ii)]{riesz1}, and clearly it is $\rho$-invariant, nonzero, and contained in $B_0$. Therefore it must equal $E$, and so $B_0 = E$.

$(i) \Leftrightarrow (ii)$ follows from Lemma~\ref{l:projband_equiv_order_irr}.
\end{proof}

This lemma will be improved significantly later on, see Theorem~\ref{t:equivalencies}.

We will now investigate the question whether a positive representation of a finite group, satisfying a suitable notion of irreducibility, is necessarily finite dimensional. As explained in the introduction, this is not quite as obvious as it is for Banach space representations. It follows as a rather special case from \cite[Theorem~III.10.4]{schaefer} that a closed ideal irreducible positive representation of a finite group in a Banach lattice is finite dimensional, but this seems to be the only known available result in this vein. We will show, see Theorem~\ref{t:principal_band_irreducible_finite_dim}, that a positive principal band irreducible representation of a finite group in a Riesz space is finite dimensional. This implies the aforementioned finite dimensionality result for Banach lattices. As a preparation, we need four lemmas.

\begin{lemma}\label{l:irrarch}
Let $G$ be a finite group, $E$ a Riesz space and $\rho \colon G \to \Aut(E)$ a positive principal band irreducible representation. Then $E$ is Archimedean.
\end{lemma}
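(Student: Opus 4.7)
The plan is a proof by contradiction. Suppose $E$ is not Archimedean; then standard Riesz space theory provides elements $x, y \in E^+$ with $x > 0$, $y > 0$, and $n x \leq y$ for every $n \in \N$. Since principal band irreducibility is a statement about $G$-invariant objects and $x, y$ need not be $G$-invariant, the first move is to average over the orbit: set
\[
x' := \sum_{s \in G} \rho_s x, \qquad y' := \sum_{s \in G} \rho_s y.
\]
Both sums are finite, both lie in $E^+$, both are $G$-invariant, and each dominates the corresponding unprimed element, so neither is zero. Applying $\rho_s$ to $nx \leq y$ and summing over $G$ yields the crucial surviving inequality $n x' \leq y'$ for every $n \in \N$.

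The heart of the argument is the invariant object
\[
N := \{\, z \in E : n|z| \leq y' \text{ for all } n \in \N \,\}
\]
of elements ``infinitesimal relative to $y'$''. I would verify that $N$ is a $\rho$-invariant band of $E$ containing $x'$. Solidity is immediate from monotonicity of $|\cdot|$. Closure under addition uses a doubling trick: if $z_1, z_2 \in N$, then $(2n)|z_i| \leq y'$ gives $n|z_i| \leq y'/2$, hence $n|z_1+z_2| \leq n|z_1| + n|z_2| \leq y'$. $\rho$-invariance follows because $\rho_s$ is a lattice automorphism with $\rho_s y' = y'$, giving $n|\rho_s z| = \rho_s(n|z|) \leq \rho_s y' = y'$. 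The only nontrivial point is that $N$ is a band, not merely an ideal: if $0 \leq z_\alpha \uparrow z$ with $z_\alpha \in N$, then because multiplication by the positive scalar $n$ is an order isomorphism of $E$ and so preserves existing suprema, one has $nz = \sup_\alpha n z_\alpha \leq y'$, whence $z \in N$.

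The contradiction is now quick. The principal band generated by $x'$ is nonzero (since $x' > 0$) and, by the identity $\rho_s\{D\} = \{\rho_s D\}$ recalled in the preliminaries applied to $D = \{x'\}$ together with $\rho_s x' = x'$, it is $\rho$-invariant. Principal band irreducibility forces it to be all of $E$. Since $N$ is a band containing $x'$, it contains the principal band generated by $x'$; hence $N = E$. In particular $y' \in N$, which means $ny' \leq y'$ for every $n \in \N$. Taking $n = 2$ and subtracting gives $y' \leq 0$, contradicting $y' > 0$.

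The main conceptual obstacle is identifying the right invariant object: once one notices that the set of elements infinitesimal relative to $y'$ is not just an invariant ideal but an invariant band, the proof collapses onto the hypothesis through the principal band generated by the averaged element $x'$. The only mildly technical point is verifying the band property in the possibly non-Archimedean space $E$, and this reduces to the observation that multiplication by a positive integer, as a scalar operator, commutes with suprema.
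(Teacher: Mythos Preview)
Your proof is correct and follows essentially the same route as the paper's: both average over $G$ to obtain a $\rho$-invariant $x'=\sum_{s\in G}\rho_s x$ with $nx'\le y':=\sum_{s\in G}\rho_s y$ for all $n$, invoke principal band irreducibility to get $\{x'\}=E$, and then derive the contradiction $y'\le 0$. The only difference is packaging: the paper shows directly that $y'$ is an upper bound for the positive cone of the ideal generated by $x'$ and hence (passing to the generated band) for all of $E^+$, whereas you make the intermediate band explicit as the set $N$ of elements infinitesimal relative to $y'$ and verify its band property by hand---a slightly more self-contained variant of the same idea.
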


\begin{proof}
 Suppose $E$ is not Archimedean. Then $E\neq 0$ and there exist $x,y \in E$ such that $0 < \lambda x \leq y$ for all $\lambda > 0$. The band $B$ generated by $\sum_{s \in G} \rho_s x$ is principal, $\rho$-invariant and nonzero, and therefore equals $E$. Let $u \geq 0$ be an element of the ideal $I$ generated by $\sum_{s \in G} \rho_s x$. Then for some $\lambda \geq 0$,
\[ u \leq \lambda \sum_{s \in G} \rho_s x = \sum_{s \in G} \rho_s (\lambda x) \leq \sum_{s \in G} \rho_s y, \]
and so $\sum_{s \in G} \rho_s y$ is an upper bound for $I^+$, and hence for $B^+ = E^+$, which is absurd since $E\neq 0$. Therefore $E$ is Archimedean.
\end{proof}

\begin{lemma}\label{l:nontrivialprinband}
Let $E$ be a Riesz space with $\dim(E) \geq 2$. Then $E$ contains a nontrivial principal band.
\end{lemma}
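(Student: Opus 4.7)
My plan is to dichotomize on whether $E$ contains two nonzero disjoint elements.

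\textbf{Case 1: some nonzero $x,y\in E$ satisfy $|x|\wedge|y|=0$.} Replacing them by their moduli, I may assume $x,y\geq 0$, and will take $B:=\{x\}$. Clearly $B\neq\{0\}$. Since $\{x\}^{dd}$ is a band containing $x$, one has $B\subseteq\{x\}^{dd}$; as $y\in\{x\}^d$ and $\{x\}^{dd}\cap\{x\}^d=\{0\}$, this forces $y\notin B$, so $B\neq E$.

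\textbf{Case 2: no two nonzero elements of $E$ are disjoint.} Since $(u-v)^+\wedge(v-u)^+=0$ for all $u,v\in E$, this forces $E$ to be totally ordered. Recalling that an Archimedean totally ordered Riesz space embeds order-isomorphically in $\R$ and is hence at most one-dimensional, the hypothesis $\dim E\geq 2$ forces $E$ to be non-Archimedean, so there exist $x,y>0$ with $nx\leq y$ for every $n\in\N$. I would then work with the principal ideal $I_x:=\{z\in E:|z|\leq nx\text{ for some }n\}$: it contains $x$, misses $y$, and any band through $x$ already contains $I_x$ by solidity. So once I show $I_x$ is itself a band it must coincide with $\{x\}$, yielding a nontrivial principal band.

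The main obstacle is therefore order-closedness of $I_x$. I plan to argue by contradiction: suppose $D\subseteq I_x^+$ has a supremum $s\in E\setminus I_x$. Since $E$ is totally ordered and $s\not\leq nx$ for any $n$, one must have $s>nx$ for every $n\in\N$. For each $d\in D$ I pick $m$ with $d\leq mx$; then $s>(m+1)x\geq d+x$, whence $s-x\geq d$. Thus $s-x$ is an upper bound of $D$ strictly below $s$, contradicting $s=\sup D$. Hence $s\in I_x$, $I_x$ is a band, and the proof is complete.
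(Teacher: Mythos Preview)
Your argument is correct. Case~1 and the paper's proof coincide in substance: if $E$ is not totally ordered then one finds disjoint nonzero $x^+,x^-$ (or your $x,y$), and the principal band generated by one of them is nontrivial because the other lies in its disjoint complement. Both proofs also invoke the standard fact that an Archimedean totally ordered Riesz space has dimension at most one.

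The genuine difference is in the totally ordered case. The paper does \emph{not} construct a band there; instead it argues by contradiction, observing that if $E$ had no nontrivial principal band then the trivial group would act principal band irreducibly, so Lemma~\ref{l:irrarch} forces $E$ to be Archimedean, and then ``totally ordered plus Archimedean'' gives $\dim E\leq 1$. You take the opposite tack: you accept that $E$ may be non-Archimedean, pick $0<x$ infinitely small relative to some $y$, and show directly that the principal ideal $I_x$ is already order closed (your supremum argument, which works cleanly because in a chain every subset is directed), so that $\{x\}=I_x$ is a nontrivial principal band. Your route is self-contained and does not rely on Lemma~\ref{l:irrarch}; the paper's route is shorter once that lemma is available and exploits it neatly. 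Both are fine; yours has the minor advantage of being independent of the surrounding machinery.
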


\begin{proof}
Suppose $E$ does not contain a nontrivial principal band. Then the trivial group acts principal band irreducibly on $E$,
so by Lemma~\ref{l:irrarch}, $E$ is Archimedean. Furthermore $E$ is totally ordered,
otherwise there exists an element $x$ which is neither positive nor negative and so $x^+ \notin B_{x^-} = E$.
However, by \cite[Exercise~1.14 (proven on page 272)]{aliprantesburkinshaw}, a totally ordered and Archimedean space has dimension $0$ or $1$, which is a contradiction. Hence $E$ contains a nontrivial principal band.
\end{proof}

\begin{lemma}\label{l:findimidealband}
Let $E$ be an Archimedean Riesz space and let $I \subset E$ be a finite dimensional ideal. Then $I$ is a principal projection band.
\end{lemma}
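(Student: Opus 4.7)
The plan is to find a strong order unit $e \in I$, verify $I = I_e$, show $I$ is a projection band, and conclude $I = B_e$. For the order unit, take any linear basis $x_1, \ldots, x_k$ of $I$ and set $e := |x_1| + \cdots + |x_k| \in I$; for $y = \sum_i c_i x_i \in I$, the Riesz inequality gives $|y| \leq (\max_i |c_i|) e$, so $e$ is a strong order unit of $I$. It follows immediately that the principal ideal $I_e$ generated by $e$ in $E$ equals $I$: the inclusion $I_e \subseteq I$ holds since $e \in I$ and $I$ is an ideal, while the reverse inclusion is what has just been shown.

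I then invoke the classical fact that a finite dimensional Archimedean Riesz space is lattice isomorphic to $\R^k$, so $I$ is Dedekind complete and possesses a positive disjoint basis $e_1, \ldots, e_k$ which we arrange to satisfy $e = e_1 + \cdots + e_k$. To show $I$ is a projection band, let $x \in E^+$ and set $D := \{a \in I : 0 \leq a \leq x\}$. Writing $a \in D$ in its disjoint-basis coordinates $(a_1, \ldots, a_k) \in \R_{\geq 0}^k$, one has $a_i e_i \leq a \leq x$, so the Archimedean property of $E$ forces the $a_i$ to be uniformly bounded over $D$; hence $D$ is dominated by a common multiple of $e$ in $I$, and by Dedekind completeness $p := \sup_I D$ exists. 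For any upper bound $y \in E^+$ of $D$, the element $p \wedge y$ lies in $I$ (as $I$ is an ideal) and still dominates $D$, so the supremum property gives $p \leq p \wedge y \leq y$. Thus $p = \sup_E D$, and in particular $p \leq x$.

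Set $q := x - p \geq 0$. If $q \notin I^d$, pick $a \in I^+$ with $b := q \wedge a > 0$; then $b \in I^+$ since $0 \leq b \leq a \in I$, and $p + b \leq p + q = x$ shows $p + b \in D$ while $p + b > p$, contradicting the definition of $p$. Therefore $q \in I^d$, and decomposing arbitrary elements into positive and negative parts gives $E = I \oplus I^d$, so $I$ is a projection band. Since $I$ is now a band containing $e$, the inclusion $B_e \subseteq I$ holds, and $I = I_e \subseteq B_e$ is trivial, so $I = B_e$ is a principal projection band. I expect the most delicate point to be the upgrade from $\sup_I D$ to $\sup_E D$ and the disjointness verification for $q$; both hinge on the ideal property that meets with elements of $I$ stay inside $I$, combined with the Dedekind completeness that finite dimensionality provides.
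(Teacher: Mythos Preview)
Your argument is correct (aside from a harmless slip: the Riesz estimate gives $|y|\leq(\sum_i|c_i|)e$, not $(\max_i|c_i|)e$, but either bound establishes that $e$ is a strong order unit). However, your route differs from the paper's. The paper observes that $I\cong\R^n$ has atoms $e_1,\ldots,e_n$, which remain atoms in $E$ because $I$ is an ideal; in an Archimedean space the principal ideal generated by an atom is automatically a projection band, and a finite sum of principal projection bands is again a principal projection band. Your proof, by contrast, bypasses atoms entirely: you exhibit a strong order unit to get $I=I_e$, and then construct the band projection onto $I$ by hand, using the Dedekind completeness of $I$ together with the Archimedean property of $E$ to guarantee that $\sup_I\{a\in I:0\leq a\leq x\}$ exists and coincides with the supremum in $E$. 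The paper's approach is quicker and leans on quotable structure theory for atoms; yours is more self-contained and makes the projection explicit, which has the merit of showing directly why finite dimensionality (via Dedekind completeness) is what drives the result.
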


\begin{proof}
By \cite[Theorem~26.11]{riesz1} $I \cong \R^n$. Let $e_1, \ldots, e_n$ be atoms that generate $I$.
It follows that $e_1, \ldots, e_n$ are also atoms in $E$, and that $I = \sum_k I_{e_k}$, where $I_{e_k}$ denotes the ideal generated by $e_k$.
By \cite[Theorem 26.4]{riesz1} the $I_{e_k}$ are actually projection bands in $E$, and so $I$,
as a sum of principal projection bands, is a principal projection band by \cite[Chapter~4.31,~page~181]{riesz1}.
\end{proof}

\begin{lemma}\label{l:smallband}
 Let $G$ be a finite group, $E$ an Archimedean Riesz space, $B' \subset E$ a nonzero principal band and $\rho: G \to \Aut(E)$ a positive representation. Then there exists a nonzero principal band $B \subset B'$ such that for all $t \in G$, either $B \cap \rho_t B = 0$ or $B = \rho_t B$.
\end{lemma}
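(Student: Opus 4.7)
The plan is to construct $B$ explicitly as a principal band $\{z\}^{dd}$ for some carefully chosen $z \in (B')^+$, so that for each $t \in G$ either $\rho_t z = z$ or $\rho_t z \wedge z = 0$. Since each $\rho_t$ is a lattice automorphism and commutes with finite infima, this suggests looking at infima taken over subsets of a single $G$-orbit.

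Concretely, I would first fix any $x > 0$ in $B'$ and define, for each nonempty $S \subseteq G$, the element $y_S := \bigwedge_{s \in S} \rho_s x$; the identity $\rho_t y_S = y_{tS}$ is then immediate. Next, I would consider the family $\mathcal{S}$ of subsets $S \subseteq G$ with $e \in S$ and $y_S > 0$. The singleton $\{e\}$ lies in $\mathcal{S}$ since $y_{\{e\}} = x > 0$, and because $G$ is finite there is a maximal element $S_0 \in \mathcal{S}$. Setting $z := y_{S_0}$, one has $z > 0$ and $z \leq x \in B'$, so $z \in B'$ by solidity of the band $B'$; hence $B := \{z\}^{dd}$ is a nonzero principal band contained in $B'$.

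To verify the required dichotomy, fix $t \in G$ and split into two cases. If $tS_0 = S_0$ then $\rho_t z = y_{tS_0} = y_{S_0} = z$, so $\rho_t B = B$. If $tS_0 \neq S_0$, then because the bijection $s \mapsto ts$ preserves cardinality we have $tS_0 \not\subseteq S_0$, and so $S_0 \cup tS_0$ is a strictly larger subset of $G$ still containing $e$. Maximality of $S_0$ in $\mathcal{S}$ then forces $y_{S_0 \cup tS_0} = 0$; but this infimum is precisely $z \wedge \rho_t z$, so $\rho_t z \in \{z\}^d$. Consequently $\rho_t B = \{\rho_t z\}^{dd} \subseteq \{z\}^d$, whence $B \cap \rho_t B \subseteq B \cap B^d = \{0\}$.

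The only step requiring care is the insistence that $e \in S$ throughout: this is exactly what guarantees $z \leq x$ and hence $z \in B'$, and is also what ensures that enlarging $S_0$ to $S_0 \cup tS_0$ remains within the family on which maximality was invoked. Beyond that, the argument is a direct lattice-theoretic calculation using only that $\rho_t \in \Aut(E)$ and that $|G| < \infty$; in particular, no appeal to the Archimedean hypothesis on $E$ is needed.
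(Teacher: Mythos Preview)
Your proof is correct and uses the same maximal-subset idea as the paper's. The paper works at the band level, setting $B := \bigcap_{s \in M} \rho_s B'$ for a maximal $M$ in $\{S \subseteq G : e \in S,\ \bigcap_{s \in S} \rho_s B' \neq 0\}$, and then must invoke the (Archimedean-specific) fact that a finite intersection of principal bands is again principal. You work instead at the element level, taking infima $y_S = \bigwedge_{s \in S} \rho_s x$, which makes principality of $B$ automatic; the combinatorial core---maximality forces either $tS_0 = S_0$ or disjointness---is identical.

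One caveat on your closing remark: as written, with $B := \{z\}^{dd}$, the Archimedean hypothesis is still used implicitly, since in a non-Archimedean space $\{z\}^{dd}$ need not coincide with the band generated by $z$ (so it need not be principal), and $\{z\}^{dd} \subseteq B'$ requires $(B')^{dd} = B'$. If you instead take $B$ to be the band \emph{generated} by $z$, then $B$ is principal and contained in $B'$ by minimality, and since $\rho_t B \subseteq \{z\}^d$ and $B \subseteq \{z\}^{dd}$ one still gets $B \cap \rho_t B \subseteq \{z\}^{dd} \cap \{z\}^d = 0$. With that small adjustment your observation that the Archimedean hypothesis is unnecessary is indeed correct, which is a modest sharpening over the paper's argument.
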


\begin{proof}
 The set $\cS := \{S \subset G: e \in S, \; \bigcap_{s \in S} \rho_s B' \not= 0\}$ is partially ordered by inclusion and nonempty, since $\{e\} \in \cS$. Pick a maximal element $M \in \cS$, and let $B := \bigcap_{s \in M} \rho_s B'$. Then $B$ is a principal band by \cite[Theorem~48.1]{riesz1}. Let $t \in G$ and suppose that $B \cap \rho_t B \not= 0$. Then
\[ 0 \not= B \cap \rho_t B = \bigcap_{s \in M} \rho_s B' \cap \rho_t \bigcap_{s \in M} \rho_s B' = \bigcap_{r \in M \cup tM} \rho_r B', \]
and by the maximality of $M$ we obtain $M \cup tM = M$, and so $tM \subset M$. Combined with $|tM| = |M|$ we conclude that $tM = M$, and then
\[ \rho_t B = \rho_t \bigcap_{s \in M} \rho_s B' = \bigcap_{r \in tM} \rho_r B' = \bigcap_{r \in M} \rho_r B' = B. \]
\end{proof}

Using these lemmas, we can now establish our main theorem on finite dimensionality.

\begin{theorem}\label{t:principal_band_irreducible_finite_dim}
Let $G$ be a finite group, $E$ a nonzero Riesz space and $\rho: G \to \Aut(E)$ a positive principal band irreducible representation. Then $E$ is Archimedean, finite dimensional, and the dimension of $E$ divides the order of $G$.
\end{theorem}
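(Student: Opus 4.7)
The plan is to obtain Archimedean-ness immediately from Lemma~\ref{l:irrarch} and to establish the remaining assertions by induction on $|G|$. For the base case $|G| = 1$, the principal band irreducibility of the trivial group, combined with Lemma~\ref{l:nontrivialprinband}, forces $\dim E \leq 1$; since $E \neq 0$ we get $\dim E = 1$, which divides $|G| = 1$.

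For the inductive step, suppose the theorem is known for all groups of smaller order. If $\dim E = 1$ we are done, so assume $\dim E \geq 2$. Lemma~\ref{l:nontrivialprinband} produces a nontrivial principal band $B'\subset E$, and Lemma~\ref{l:smallband} refines this to a nonzero principal band $B \subset B'$ (in particular $B \neq E$) whose $G$-translates are pairwise either equal or disjoint. Setting $H := \{t \in G: \rho_t B = B\}$, a subgroup of $G$, and fixing a transversal $T$ for $G/H$, the translates $\{\rho_t B: t \in T\}$ are pairwise disjoint. If $x \geq 0$ generates $B$ and $y := \sum_{t \in T} \rho_t x$, then a standard Archimedean computation (the principal band generated by a finite sum of pairwise disjoint positive elements is the direct sum of their principal bands) identifies $B_y$ with $\bigoplus_{t \in T} \rho_t B$. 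For any $s \in G$ and $t \in T$, writing $st = t'h$ with $t' \in T$ and $h \in H$ gives $\rho_{st} B = \rho_{t'} \rho_h B = \rho_{t'} B$, so $G$ merely permutes the summands; hence $B_y$ is a $G$-invariant principal band, and principal band irreducibility yields $E = B_y = \bigoplus_{t \in T} \rho_t B$. Since $B \neq E$, we conclude $[G:H] \geq 2$, so $|H| < |G|$.

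The main subtlety is to verify that the restricted representation of $H$ on the Riesz space $B$ is itself principal band irreducible, so that the inductive hypothesis applies. For this I would first observe that every principal band of $B$ is a principal band of $E$: if $z \in B$, then the band generated by $z$ in $E$ is contained in $B$ (because $B$ is a band containing $z$), and by minimality coincides with the band generated by $z$ in $B$. Given a nonzero $H$-invariant principal band $B_0 \subset B$, the $H$-invariance makes $\rho_t B_0$ depend only on the coset $tH$, and the same Archimedean computation as before shows that $\tilde{B_0} := \bigoplus_{t \in T} \rho_t B_0$ is a principal band of $E$; it is $G$-invariant by the same permutation argument. Principal band irreducibility of $\rho$ then forces $\tilde{B_0} = E = \bigoplus_{t \in T} \rho_t B$, and comparing these disjoint direct sums summand-by-summand, together with $\rho_t B_0 \subset \rho_t B$, yields $\rho_t B_0 = \rho_t B$ for every $t$, hence $B_0 = B$.

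Applying the inductive hypothesis to $H$ acting on $B$ then gives $\dim B < \infty$ and $\dim B \mid |H|$. Taking dimensions in $E = \bigoplus_{t \in T} \rho_t B$ yields $\dim E = [G:H] \cdot \dim B$, which is finite and divides $[G:H] \cdot |H| = |G|$, completing the induction. The main obstacle I anticipate is keeping the entire argument within the world of \emph{principal} bands, rather than drifting into arbitrary bands or projection bands: this is why both the Archimedean fact identifying $B_y$ with the direct sum of principal bands, and the transfer of principal-band status between $B$ and $E$, are crucial structural ingredients.
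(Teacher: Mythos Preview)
Your overall strategy mirrors the paper's---induction on $|G|$, using Lemmas~\ref{l:irrarch}, \ref{l:nontrivialprinband}, and \ref{l:smallband} to produce the subgroup $H$ and the band $B$---but there is a genuine gap. The claim you call a ``standard Archimedean computation,'' namely that the principal band $B_y$ generated by $y=\sum_{t\in T}\rho_t x$ coincides with the order direct sum $\bigoplus_{t\in T}\rho_t B$, is false in a general Archimedean Riesz space. The containment $\bigoplus_t \rho_t B \subset B_y$ is clear, but a finite direct sum of pairwise disjoint principal bands need not itself be a band. In $C[0,1]$, take tent functions $f$ and $g$ with open supports $(0,\tfrac12)$ and $(\tfrac12,1)$; then $B_{f+g}=C[0,1]$, whereas $B_f\oplus B_g=\{h:h(\tfrac12)=0\}$ is strictly smaller. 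You invoke the same computation again for $B_0$, and your summand-by-summand comparison $\bigoplus_t\rho_t B_0=\bigoplus_t\rho_t B$ rests on both direct-sum identities, so the argument does not close.

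The paper avoids this in two ways. First, to show that $H$ acts principal band irreducibly on $B$, it does not try to realize $\sum_t \rho_t B_0$ as a band; instead it passes to the band $\{\sum_{s\in G}\rho_s B_0\}$ \emph{generated} by the sum (principal by \cite[Theorem~48.1]{riesz1}), concludes this equals $E$, and then intersects with $B$, distributing the intersection inside the generated band to obtain $B\subset B_0$. Second, the decomposition $E=\bigoplus_{t\in T}\rho_t B$ is established only \emph{after} the inductive hypothesis gives $\dim B<\infty$: then $\sum_{s\in G}\rho_s B$ is a finite-dimensional ideal, hence a principal projection band by Lemma~\ref{l:findimidealband}, hence equals $E$, and disjointness of the $\rho_t B$ makes the sum direct. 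The order of these two steps is essential; your proposal reverses it, and that is precisely where it breaks.
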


\begin{proof}
Lemma~\ref{l:irrarch} shows that $E$ is Archimedean. The proof is by induction on the order of $G$. If $G$ is the trivial group, then $E$ is one dimensional by Lemma~\ref{l:nontrivialprinband}, and we are done. Suppose, then, that the theorem holds for all groups of order strictly smaller than the order or $G$. If $E$ has only trivial principal bands, then by Lemma~\ref{l:nontrivialprinband} $E$ has dimension one, and we are done again. Hence we may assume that there exists a principal band $0 \neq B' \neq E$. By Lemma~\ref{l:smallband} there exists a nonzero principal band $B \subset B' \not= E$ such that $H := \{t \in G: B = \rho_t B \}$ satisfies $H^c = \{t \in G: B \cap \rho_t B = 0\}$. It is easy to see that $H$ is a subgroup of $G$, and has strictly smaller order than $G$: otherwise $B$ is a nontrivial $\rho$-invariant principal band, contradicting the principal band irreducibility of $\rho$.

We will now show that $\rho$ restricted to $H$ is principal band irreducible on the Riesz space $B$. Suppose $0 \neq A \subset B$ is an $H$-invariant principal band of $B$. By \cite[Theorem~48.1]{riesz1} $\{\sum_{s \in G} \rho_s A\}$ is a principal band, and so it is a nonzero $\rho$-invariant principal band of $E$. Hence it equals $E$, so using \cite[Theorem~20.2(ii)]{riesz1} in the second step and \cite[Exercise~7.7(iii)]{zaanen} in the third step,
\begin{align*}
B &= B \cap \left\{ \sum_{s \in G} \rho_s A \right\} \\
&= \left\{ B \cap \sum_{s \in G} \rho_s A \right\} \\
&= \left\{ \sum_{s \in G} (B \cap \rho_s A) \right\} \\
&= \left\{ \sum_{s \in H} (B \cap \rho_s A) + \sum_{s \in H^c} (B \cap \rho_s A) \right\} \\
&\subset \left\{ \sum_{s \in H} (B \cap \rho_s A) + \sum_{s \in H^c} (B \cap \rho_s B) \right\} \\
&\subset \left\{ \sum_{s \in H} (B \cap A) + \sum_{s \in H^c} 0 \right\} \\
&= \left\{ \sum_{s \in H} A \right\} \\
&= A.
\end{align*}
We conclude that $\rho|_H: H \to \Aut(B)$ is principal band irreducible, so $B$ has finite dimension by the induction hypothesis. By Lemma~\ref{l:findimidealband}, $\sum_{s \in G} \rho_s B$ is a principal band, which is nonzero and invariant, hence equal to $E$, and so $E$ has finite dimension as well.

Consider the sum $\sum_{sH \in G/H} \rho_s B$. This is well defined, since $\rho_t B = B$ for $t \in H$. Moreover, if $sH \not= rH$, then $r^{-1}s \notin H$ and so $\rho_{r^{-1}s} B \cap B = 0$, implying $\rho_s B \cap \rho_r B = 0$. Therefore $\sum_{sH \in G/H} \rho_s B$ is a sum of ideals with pairwise zero intersection, which is easily seen to be a direct sum using \cite[Theorem~17.6(ii)]{riesz1}. It follows that
\[ E = \sum_{s \in G} \rho_s B = \sum_{sH \in G/H} \rho_s B = \bigoplus_{sH \in G/H} \rho_s B. \]
Therefore
\[ \frac{|G|}{\dim(E)} = \frac{|G|}{|G : H| \dim(B)} = \frac{|H|}{\dim(B)} \in \N, \]
where the last step is by the induction hypothesis. Hence the dimension of $E$ divides the order of $G$ as well.
\end{proof}

From Theorem~\ref{t:order_dual}, where we will explicitly describe all representations as in Theorem~\ref{t:principal_band_irreducible_finite_dim}, it will also become clear that the dimension of the space divides the order of the group.

\begin{remark}\label{r:similar_theorems}
Note that Theorem~\ref{t:principal_band_irreducible_finite_dim} trivially implies a similar theorem for positive representations which are ideal irreducible, or which are band irreducible. It also answers our original question as mentioned in the Introduction: a positive projection band irreducible representation of a finite group in a Banach lattice with the projection property is finite dimensional. Indeed, an invariant principal band is then an invariant projection band.
\end{remark}

When combining Theorem~\ref{t:principal_band_irreducible_finite_dim} with Lemma~\ref{l:projband_equiv_band_irr}, we obtain the following result. Amongst others it shows that, under a mild condition on the space, various notions of irreducibility for a positive representation of a finite group are, after the fact, actually the same for finite groups. It should be compared with the equality of irreducibility and indecomposability for unitary representations of arbitrary groups, and for finite dimensional representations of finite groups whenever Maschke's theorem applies. As already mentioned earlier, if a Riesz space has sufficiently many projections, it is automatically Archimedean, cf. \cite[Theorem~30.4]{riesz1}.

\begin{theorem}\label{t:equivalencies}
 Let $G$ be a finite group, $E$ a Riesz space with sufficiently many projections and $\rho \colon G \to \Aut(E)$ a positive representation. Then the following are equivalent:
\begin{enumerate}
 \item $\rho$ is order indecomposable;
 \item $\rho$ is projection band irreducible;
 \item $\rho$ is band irreducible;
 \item $\rho$ is ideal irreducible;
 \item $\rho$ is principal band irreducible.
\end{enumerate}
If these equivalent conditions hold, then $E$ is finite dimensional, and the dimension of $E$ divides the order of $G$ if $E$ is nonzero.
\end{theorem}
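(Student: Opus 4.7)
The plan is to close a chain of implications among the five properties, using previously established results, and then apply Theorem~\ref{t:principal_band_irreducible_finite_dim} for the dimension statement. From Lemma~\ref{l:projband_equiv_band_irr}, which requires the hypothesis that $E$ has sufficiently many projections, we already have the equivalence $(i)\Leftrightarrow (ii)\Leftrightarrow (iii)$. The implication $(iv)\Rightarrow (iii)$ is immediate since every band is an ideal, and $(iii)\Rightarrow (v)$ is immediate since every principal band is a band. So all that remains is to close the cycle with one more arrow; the natural choice is $(v)\Rightarrow (iv)$.

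For $(v)\Rightarrow (iv)$, I would first apply Theorem~\ref{t:principal_band_irreducible_finite_dim} to the assumed principal band irreducible representation to conclude that $E$ is Archimedean and finite dimensional. Let $I\subset E$ be an arbitrary ideal; then $I$ is itself finite dimensional, so by Lemma~\ref{l:findimidealband} it is a principal projection band of $E$, in particular a principal band. Consequently any nontrivial $\rho$-invariant ideal would be a nontrivial $\rho$-invariant principal band, contradicting $(v)$. Hence $\rho$ is ideal irreducible, which closes the cycle and gives the equivalence of $(i)$ through $(v)$.

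Finally, the dimensional conclusion follows directly from $(v)$ via Theorem~\ref{t:principal_band_irreducible_finite_dim}: $E$ is finite dimensional, and when $E\neq 0$ the divisibility $\dim(E)\mid |G|$ is part of that theorem's conclusion.

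There is no real obstacle here: all the serious work has been done in the preceding lemmas, most notably in Theorem~\ref{t:principal_band_irreducible_finite_dim}. The only conceptual point is recognising that once finite dimensionality (and therefore $E\cong \R^n$) has been established, Lemma~\ref{l:findimidealband} collapses the hierarchy of invariant subobjects, so every ideal is automatically a principal projection band and the five notions of irreducibility coincide.
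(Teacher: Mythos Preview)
Your proof is correct and essentially identical to the paper's. The paper also uses Lemma~\ref{l:projband_equiv_band_irr} for $(i)\Leftrightarrow(ii)\Leftrightarrow(iii)$, observes that each of $(iii)$, $(iv)$, $(v)$ implies $(v)$, invokes Theorem~\ref{t:principal_band_irreducible_finite_dim} to get $E\cong\R^n$, and then notes that in $\R^n$ the collections of bands, ideals and principal bands coincide; your explicit appeal to Lemma~\ref{l:findimidealband} for the step $(v)\Rightarrow(iv)$ is just a slightly more detailed version of that same collapse argument.
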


\begin{proof}
 By Lemma~\ref{l:projband_equiv_band_irr} the first three conditions are equivalent. Each of the last three conditions implies that $\rho$ is principal band irreducible, so by Theorem~\ref{t:principal_band_irreducible_finite_dim} each of these three conditions implies that $E$ is finite dimensional, hence lattice isomorphic to $\R^n$ for some $n$ (\cite[Theorem~26.11]{riesz1}).  But then the collections of bands, ideals and principal bands in $E$ are all the same, and hence the last three conditions are equivalent as well. The remaining statements follow from Theorem~\ref{t:principal_band_irreducible_finite_dim}.
\end{proof}

\section{Structure of finite dimensional positive Archimedean representations}\label{s:Structure of finite dimensional representations}

Now that we know from Section~\ref{s:Irreducible and indecomposable representations} that positive representations of finite groups, irreducible as in Theorem~\ref{t:principal_band_irreducible_finite_dim} or~\ref{t:equivalencies}, are necessarily in Archimedean and finite dimensional spaces, our goal is to describe the general positive finite dimensional Archimedean representations of a finite group. In such spaces, the collections of (principal) ideals, (principal) bands and projection bands are all the same, and we will use the term ``irreducible positive representation'' throughout this section to denote the corresponding common notion of irreducibility, which is the same as order indecomposability. We will see in Theorem~\ref{t:pos_rep_sum_irr_reps} that positive finite dimensional Archimedean representations of a finite group split uniquely into irreducible positive representations. Furthermore, the order equivalence classes of finite dimensional irreducible positive representations are in natural bijection with the isomorphism classes of transitive $G$-spaces, cf.\ Theorem~\ref{t:order_dual}. The latter result can be thought of as the description of the finite dimensional Archimedean part of the order dual of a finite group. The fact that such irreducible positive representations can be realized in this way also follows from \cite[Theorem~III.10.4]{schaefer}, where it is shown that strongly continuous closed ideal irreducible positive representations of a locally compact group in a Banach lattice, with compact image in the strong operator topology, can be realized on function lattices on homogeneous spaces. This general result, however, requires considerable machinery. Therefore we prefer the method below, where all follows rather easily once an explicit description of the general, not necessarily irreducible, positive representation of a finite group in a finite dimensional Archimedean space has been obtained, a result which has some relevance of its own.

Since the decomposition result below is such a close parallel to classical semisimple representation theory of finite groups, it is natural to ask whether any other features of this purely linear context survive, such as character theory. At the end of this section we show that this is, for general groups, not the case, and in the next section we will see that this is only partly so for induction.

We now proceed towards the first main step, the description of a positive finite dimensional Archimedean representation of a finite group. Since an Archimedean finite dimensional Riesz spaces is isomorphic to $\R^n$ for some $n$ (\cite[Theorem~26.11]{riesz1}), we start by describing its group $\AutRn$ of lattice automorphisms. Naturally, the well known result \cite[Theorem~3.2.10]{meyernieberg} on lattice homomorphisms between $C_0(K)$-spaces directly implies the structure of $\AutRn$, but in this case, where $K = \{1, \ldots, n\}$, this can be seen in an elementary fashion as below. Subsequently we determine the finite subgroups of $\AutRn$. After that, we can describe the positive representations of a finite group in $\R^n$ and continue from there.

\subsection{Description of $\AutRn$}

We denote the standard basis of $\R^n$ by $\{e_1, \ldots, e_n\}$. A lattice automorphism must obviously map positive atoms to positive atoms, so each $T \in \AutRn$ maps $e_i$ to $\lambda_{ji} e_j$ for some $\lambda_{ji} > 0$. This implies that $T$ can be written uniquely as the product of a strictly positive multiplication (diagonal) operator and a permutation operator. We identify the group of permutation operators with $S_n$, so each $\sigma \in S_n$ corresponds to the operator mapping $e_i$ to $e_{\sigma(i)}$. The group of strictly positive multiplication operators is identified with $\Rn$, and so there exist unique $m \in \Rn$ and $\sigma \in S_n$ such that $T = m \sigma$.

For $\sigma \in S_n$ and $m \in \Rn$, define $\sigma(m)\in\Rn$ by $\sigma(m)_i := m_{\sigma^{-1}(i)}$. This defines a homomorphism of $S_n$ into the automorphism group of $\Rn$, hence
we can form the corresponding semidirect product $\Rn \rt S_n$, with group operation $(m_1, \sigma_1) (m_2, \sigma_2) := ( m_1 \sigma_1(m_2), \sigma_1 \sigma_2)$.
On noting that, for $i=1,\ldots,n$,
\[ \sigma m \sigma^{-1} e_i = \sigma m e_{\sigma^{-1}(i)} = \sigma m_{\sigma^{-1}(i)} e_{\sigma^{-1}(i)} = m_{\sigma^{-1}(i)} e_i = \sigma(m)_i e_i = \sigma(m) e_i, \]
it follows easily that $\chi \colon \Rn \rt S_n \to \AutRn$, defined by $\chi(m, \sigma) := m \sigma$, is a group isomorphism.
From now on we identify $\AutRn$ and $\Rn \rt S_n$, using either the operator notation or the semidirect product notation.

We let $p \colon \AutRn \to S_n$, defined by $p(m, \sigma) := \sigma$, denote the canonical homomorphism of the semidirect product onto the second factor.

\subsection{Description of the finite subgroups of $\AutRn$}

Let $G$ be a finite subgroup of $\AutRn$. Then $\ker(p|_G)$ can be identified with a finite subgroup of $\ker(p) = \Rn$. Clearly the only finite subgroup of $\Rn$ is trivial, and so $p|_G$ is an isomorphism. It follows that every finite subgroup of $\AutRn$ is isomorphic to a finite subgroup of $S_n$. The next proposition makes this correspondence explicit.

\begin{proposition}\label{p:bijection_finite_subgroups}
 Let $A$ be the set of finite subgroups $G \subset \AutRn$, and let $B$ be the set of pairs $(H, q)$, where $H \subset S_n$ is a finite subgroup and $q \colon H \to \AutRn$ is a group homomorphism such that $p \circ q = \id_H$. Define $\alpha \colon A \to B$ and $\beta \colon B \to A$ by
\[ \alpha(G) := \left( p(G), (p|_G)^{-1} \right), \quad \beta(H, q): = q(H). \]
Then $\alpha$ and $\beta$ are inverses of each other.
\end{proposition}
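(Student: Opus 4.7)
The plan is to verify, in order, that $\alpha$ and $\beta$ land in the stated sets, and then to check that both compositions are the identity. The essential input is already available in the discussion immediately preceding the proposition: the kernel of $p$ is $\Rn=(\R_{>0})^n$, which is torsion-free under multiplication, so any finite subgroup it contains must be trivial.

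First I would verify that $\alpha$ is well-defined. Given a finite subgroup $G\subset\AutRn$, the subgroup $\ker(p|_G)$ sits inside the torsion-free group $\ker p=\Rn$, so it must be trivial. Hence $p|_G\colon G\to p(G)$ is a group isomorphism onto the finite subgroup $p(G)\subset S_n$, and its inverse $(p|_G)^{-1}\colon p(G)\to\AutRn$ is a group homomorphism satisfying $p\circ(p|_G)^{-1}=\id_{p(G)}$. So $\alpha(G)\in B$. For $\beta$: if $(H,q)\in B$, then $p\circ q=\id_H$ forces $q$ to be injective, so $q(H)$ is a subgroup of $\AutRn$ of the same (finite) cardinality as $H$, hence $\beta(H,q)\in A$.

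Second, I would directly compute the two compositions. For $\beta\circ\alpha$, starting from $G\in A$,
\[ \beta(\alpha(G)) = \beta\bigl(p(G),(p|_G)^{-1}\bigr) = (p|_G)^{-1}(p(G)) = G, \]
using that $p|_G$ is a bijection onto $p(G)$. For $\alpha\circ\beta$, starting from $(H,q)\in B$,
\[ \alpha(\beta(H,q))=\alpha(q(H))=\bigl(p(q(H)),(p|_{q(H)})^{-1}\bigr). \]
The first entry equals $H$ because $p(q(H))=(p\circ q)(H)=\id_H(H)=H$. For the second entry, $q\colon H\to q(H)$ is a bijection (injective by the previous paragraph, and surjective by definition), and $p\circ q=\id_H$ means $q$ is a two-sided inverse of $p|_{q(H)}$, so $q=(p|_{q(H)})^{-1}$. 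Hence $\alpha(\beta(H,q))=(H,q)$.

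There is no real obstacle: the entire content of the proposition is the observation that $\Rn$ has no nontrivial finite subgroups, which has already been established. Everything else is a routine bookkeeping check that the maps $\alpha$ and $\beta$ are mutually inverse bijections.
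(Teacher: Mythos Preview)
Your proof is correct and follows essentially the same approach as the paper's: verify well-definedness and then compute both compositions directly. You are slightly more explicit than the paper about why $\alpha$ and $\beta$ land in their target sets, but the substantive computations are identical.
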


\begin{proof}
 Clearly $p \circ (p|_G)^{-1} = \id_{p(G)}$, so $\alpha$ is well defined. Let $G \in A$, then $\beta(\alpha(G)) = \beta(p(G), (p|_G)^{-1}) = G$. Conversely, let $(H, q) \in B$, then $\alpha(\beta(H,q)) = \alpha(q(H)) = (p(q(H)), (p|_{q(H)})^{-1} )$, and since $p \circ q = \id_H$, it follows that $p(q(H)) = H$ and that $(p|_{q(H)})^{-1} = (p|_{q(H)})^{-1} \circ p \circ q = q$.
\end{proof}

By the above proposition each finite subgroup $G$ of $\AutRn$ is determined by a subgroup $H$ of $S_n$ and a homomorphism $q \colon H \to \AutRn$ satisfying $p \circ q = \id_H$. We will now investigate such maps $q$. The condition $p \circ q = \id_H$ is equivalent with the existence of a map $f \colon H \to \Rn$, such that $q(\sigma) = (f(\sigma), \sigma)$ for $\sigma \in H$. For $\sigma, \tau \in H$, we have $q(\sigma \tau) = (f(\sigma \tau), \sigma \tau)$ and
\[ q(\sigma)q(\tau) = (f(\sigma), \sigma)(f(\tau), \tau) = (\sigma(f(\tau)) f(\sigma), \sigma \tau). \]
Hence $q$ being a group homomorphism is equivalent with $f(\sigma \tau) = \sigma(f(\tau)) f(\sigma)$ for all $\sigma, \tau \in H$, and such maps are called crossed homomorphisms.

Crossed homomorphisms of a finite group into a suitably nice abelian group $(A, +)$ (in our case $(\Rn, \cdot)$) can be characterized by the following lemma, which states, in the language of group cohomology, that $H^1(H,A)$ is trivial.

\begin{lemma}\label{l:cohomology_result}
 Let $H$ be a finite group acting on an abelian group $(A, +)$ such that, for all $a \in A$, there exists a unique element of $H$, denoted by $a/|H|$, satisfying $|H|(a/|H|) = a$. Let $f \colon H \to A$ be a map. Then $f$ is a crossed homomorphism, i.e., $f(st) = s(f(t)) + f(s)$ for all $s,t \in H$, if and only if there exists an $a \in A$ such that
\[ f(s) = a - s(a) \quad \forall s \in H. \]
\end{lemma}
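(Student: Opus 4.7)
The lemma is an instance of the vanishing of $H^1(H,A)$ for a finite group $H$ acting on a uniquely $|H|$-divisible module $A$; I plan to run the standard averaging argument, taking due care with the divisibility hypothesis.

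The easy direction is a one-line verification: if $f(s) = a - s(a)$ for some $a \in A$, then for $s,t \in H$,
\[
s(f(t)) + f(s) = s(a) - s(t(a)) + a - s(a) = a - (st)(a) = f(st),
\]
so $f$ is a crossed homomorphism.

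For the converse, suppose $f$ is crossed. The plan is to average $f$ over $H$ and exploit the cocycle identity. Set $b := \sum_{t \in H} f(t) \in A$. Rewriting the cocycle condition as $s(f(t)) = f(st) - f(s)$ and summing over $t \in H$ gives
\[
s(b) = \sum_{t \in H} s(f(t)) = \sum_{t \in H} f(st) - |H| f(s) = b - |H| f(s),
\]
where in the last step I used that $t \mapsto st$ is a bijection of $H$. Hence $|H| f(s) = b - s(b)$ for every $s \in H$.

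Now I invoke the divisibility hypothesis: there is a unique $a \in A$ with $|H| a = b$. Applying the automorphism $s$ to the equation $|H| a = b$ yields $|H| s(a) = s(b)$, so $|H|(a - s(a)) = b - s(b) = |H| f(s)$. By the uniqueness of the element whose $|H|$-fold sum equals $b - s(b)$, this forces $f(s) = a - s(a)$, as desired. The only subtle point is precisely this appeal to uniqueness of division by $|H|$, which is exactly what the hypothesis guarantees; without it one would only recover $f$ up to $|H|$-torsion.
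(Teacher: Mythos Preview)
Your proof is correct and follows essentially the same averaging argument as the paper: you sum the cocycle identity over $H$ to obtain $|H|f(s) = b - s(b)$ with $b = \sum_{t \in H} f(t)$, and then divide by $|H|$ to produce $a$, whereas the paper defines $a := \frac{1}{|H|}\sum_{r \in H} f(r)$ at the outset and computes $s(a)$ directly. Your extra care in invoking the uniqueness of division by $|H|$ to cancel the factor $|H|$ is a small but worthwhile clarification.
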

\begin{proof}
Suppose $f$ is a crossed homomorphism. Let $a := \frac{1}{|H|} \sum_{r \in H} f(r)$, then, for $s \in H$,
\begin{align*}
 s(a) &= \frac{1}{|H|} \sum_{r \in H} s(f(r)) \\
&= \frac{1}{|H|} \sum_{r \in H} [f(sr) - f(s)] \\
&= \frac{1}{|H|} \sum_{r \in H} [f(r) - f(s)] \\
&= a - f(s).
\end{align*}
Hence $f(s) = a - s(a)$, as required. The converse is trivial.
\end{proof}

Combining this result with the previous discussion, we obtain the following.

\begin{corollary}
 Let $H$ be a finite subgroup of $S_n$ and let $q \colon H \to \AutRn$ be a map. Then $q$ is a homomorphism satisfying $p \circ q = \id_H$ if and only if there exists an $m \in \Rn$ such that
\[ q(\sigma) = (m \sigma(m)^{-1}, \sigma) \quad \forall \sigma \in H.\]
\end{corollary}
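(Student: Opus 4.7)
The plan is to combine the discussion immediately preceding the corollary with Lemma~\ref{l:cohomology_result}, the only real work being to translate from the additive notation of that lemma to the multiplicative one of $\Rn$.

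First, I would simply quote the paragraph preceding Lemma~\ref{l:cohomology_result}: a map $q\colon H\to \AutRn$ satisfies $p\circ q=\id_H$ precisely when it has the form $q(\sigma)=(f(\sigma),\sigma)$ for some (unique) map $f\colon H\to\Rn$, and under this parametrization $q$ is a group homomorphism if and only if $f$ is a crossed homomorphism, i.e.\ $f(\sigma\tau)=\sigma(f(\tau))\,f(\sigma)$ for all $\sigma,\tau\in H$.

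Next, I would apply Lemma~\ref{l:cohomology_result} to the abelian group $A=\Rn$ with its multiplicative (coordinatewise) group structure, on which $H\subset S_n$ acts by coordinate permutation in exactly the way used throughout the section. The hypothesis that each element of $A$ admits a unique $|H|$-th ``division'' is immediate: for any $x\in\Rn$ the element $y\in\Rn$ defined coordinatewise by $y_i:=x_i^{1/|H|}$ is the unique element with $y^{|H|}=x$. The conclusion of the lemma, rewritten multiplicatively, states that $f$ is a crossed homomorphism if and only if there exists $m\in\Rn$ with $f(\sigma)=m\cdot\sigma(m)^{-1}$ for all $\sigma\in H$.

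Assembling the two equivalences gives exactly the statement of the corollary. The hard part, if one can call it that, is purely notational: keeping track of the switch from the additive language of Lemma~\ref{l:cohomology_result} (where ``$a-s(a)$'' appears) to the multiplicative language of $\Rn$ (where the same identity reads ``$m\,\sigma(m)^{-1}$''). Beyond that bookkeeping, no further argument is required.
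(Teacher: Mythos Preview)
Your proposal is correct and matches the paper's approach exactly: the paper simply states this corollary as an immediate combination of Lemma~\ref{l:cohomology_result} with the discussion preceding it, and you have spelled out precisely that combination, including the verification that $\Rn$ satisfies the unique-division hypothesis of the lemma.
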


Rewriting this in multiplicative notation rather than semidirect product notation yields the following.

\begin{corollary}\label{c:characterization_finite_subgroups}
 Let $G$ be a finite subgroup of $\AutRn$. Then there is a unique finite subgroup $H \subset S_n$ and an $m \in \Rn$ such that
\[ G = \left\{ m \sigma(m)^{-1} \sigma: \sigma \in H \right\} = mHm^{-1}. \]
Conversely, if $H \subset S_n$ is a finite subgroup and $m \in \Rn$, then $G \subset \AutRn$ defined by the above equation is a finite subgroup of $\AutRn$.
\end{corollary}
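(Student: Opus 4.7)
The plan is to read this as the operator-notation reformulation of the preceding corollary, combined with Proposition~\ref{p:bijection_finite_subgroups}. First I would take a finite subgroup $G \subset \AutRn$ and set $H := p(G)$. Since $\ker(p|_G)$ embeds in $\ker(p) = \Rn$ and the only finite subgroup of $\Rn$ is trivial, $p|_G$ is a group isomorphism onto $H$. Then $q := (p|_G)^{-1}\colon H \to \AutRn$ is a homomorphism with $p \circ q = \id_H$, so the preceding corollary supplies an $m \in \Rn$ with $q(\sigma) = (m\sigma(m)^{-1},\sigma)$ for all $\sigma \in H$, and $G = q(H)$ by Proposition~\ref{p:bijection_finite_subgroups}.

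Next I would translate this into multiplicative (operator) notation. Recall from the calculation just before the definition of $\chi$ that $\sigma m \sigma^{-1} = \sigma(m)$ in $\AutRn$. Replacing $m$ by $m^{-1}$ and using that $\sigma$ acts on $\Rn$ as a group automorphism (so $\sigma(m^{-1}) = \sigma(m)^{-1}$) gives $\sigma m^{-1} = \sigma(m)^{-1}\sigma$. Multiplying by $m$ on the left yields
\[ m\sigma(m)^{-1}\sigma = m\sigma m^{-1}, \]
so each element of $G$ has the form $m\sigma m^{-1}$ with $\sigma \in H$, and therefore $G = mHm^{-1}$. Uniqueness of $H$ is automatic: from $G = mHm^{-1}$ one recovers $H = p(G)$, because $p$ is a group homomorphism that is the identity on $S_n$ and trivial on $\Rn$. (The element $m$ itself is of course not unique, e.g.\ $m$ may be replaced by any $m'$ with $m'm^{-1}$ fixed by every $\sigma \in H$.)

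For the converse, given a finite subgroup $H \subset S_n$ and $m \in \Rn$, conjugation by $m$ is an inner automorphism of the group $\AutRn$, so $mHm^{-1}$ is a subgroup of $\AutRn$ of the same (finite) cardinality as $H$; the identity $m\sigma m^{-1} = m\sigma(m)^{-1}\sigma$ proved above shows that this subgroup coincides with the set written in the displayed formula.

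There is no serious obstacle: the content of the corollary is the verification that the semidirect-product expression $m\sigma(m)^{-1}\sigma$ from the previous corollary is exactly the conjugate $m\sigma m^{-1}$, which follows at once from the commutation rule $\sigma m = \sigma(m)\sigma$ established when identifying $\AutRn$ with $\Rn \rtimes S_n$.
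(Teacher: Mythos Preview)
Your argument is correct and follows essentially the same route as the paper: invoke Proposition~\ref{p:bijection_finite_subgroups} to write $G = q(H)$ with $H = p(G)$, apply the previous corollary to get the form of $q$, and then pass from semidirect-product notation to operator notation. The paper's proof is terser (it simply says ``the rest follows from the previous corollary''), whereas you have written out explicitly the commutation computation $m\sigma m^{-1} = m\sigma(m)^{-1}\sigma$ and the converse direction, but the underlying strategy is identical.
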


\begin{proof}
By Proposition~\ref{p:bijection_finite_subgroups}, $G = q(p(G))$, for some $q \colon p(G) \to \Rn$ satisfying $p \circ q = \id_{p(G)}$. So $H = p(G)$ is unique, and the rest follows from the previous corollary.
\end{proof}

Note that, given a finite subgroup $G \subset \AutRn$, the subgroup $H \subset S_n$ is unique, but the multiplication operator $m$ in Corollary~\ref{c:characterization_finite_subgroups} is obviously not unique, e.g., both $m$ and $\lambda m$ for $\lambda > 0$ induce the same $G$.

\subsection{Positive finite dimensional Archimedean representations}

In this subsection we obtain our main results on finite dimensional positive representations of finite groups in Archimedean spaces: explicit description of such representations (Theorem~\ref{t:characterization_rep_into_Rn}), decomposition into irreducible positive representations (Theorem~\ref{t:pos_rep_sum_irr_reps}) and description of irreducible positive representations up to order equivalence (Theorem~\ref{t:order_dual}).

Applying the results from the previous subsection, in particular Proposition~\ref{p:bijection_finite_subgroups} and Lemma~\ref{l:cohomology_result}, we obtain the following. Recall that we view $S_n \subset \AutRn$, by identifying $\sigma \in S_n$ with a permutation matrix, and a representation $\pi \colon G \to S_n \subset \AutRn$ is called a \emph{permutation representation}. 

\begin{theorem}\label{t:characterization_rep_into_Rn}
 Let $G$ be a finite group and $\rho \colon G \to \AutRn$ a positive representation. Then there is a unique permutation representation $\pi$ and an $m \in \Rn$ such that
\[ \rho_s = m \pi_s m^{-1} \quad \forall s \in G. \]
Conversely, any permutation representation $\pi \colon G \to S_n$ and $m \in \Rn$ define a positive representation $\rho$ by the above equation.
\end{theorem}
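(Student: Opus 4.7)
The plan is to reduce the theorem to Lemma~\ref{l:cohomology_result} via the semidirect product structure of $\AutRn = \Rn \rt S_n$. First I would define $\pi := p \circ \rho \colon G \to S_n$, which is automatically a group homomorphism, and so a permutation representation; uniqueness of $\pi$ is immediate from this definition, since any decomposition $\rho_s = m\pi_s m^{-1}$ would force $p(\rho_s) = p(m)p(\pi_s)p(m^{-1}) = \pi_s$. It remains to produce $m \in \Rn$ with $\rho_s = m\pi_s m^{-1}$.

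Next I would write each $\rho_s$ in semidirect product form as $\rho_s = (f(s), \pi_s)$ for a uniquely determined map $f \colon G \to \Rn$. Translating the homomorphism property $\rho_{st} = \rho_s \rho_t$ through the multiplication rule $(m_1, \sigma_1)(m_2, \sigma_2) = (m_1\sigma_1(m_2), \sigma_1\sigma_2)$ yields the crossed homomorphism identity
\[
f(st) = \pi_s(f(t))\, f(s) \qquad (s, t \in G),
\]
where $G$ acts on the abelian group $(\Rn, \cdot)$ through $\pi$.

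The key step is then to apply Lemma~\ref{l:cohomology_result} with $H = G$ and $A = (\Rn, \cdot)$. The unique-root hypothesis holds because every component of an element of $\Rn = (\R_{>0})^n$ has a unique positive $|G|$-th root, so divisibility and uniqueness of $|G|$-th roots transfer componentwise. The lemma produces some $m \in \Rn$ with $f(s) = m \cdot \pi_s(m)^{-1}$ for all $s \in G$. Using the identification $\sigma m \sigma^{-1} = \sigma(m)$ already recorded in the description of $\AutRn$, one checks that $(m\pi_s(m)^{-1}, \pi_s)$ equals $m\pi_s m^{-1}$ in operator notation, and thus $\rho_s = m \pi_s m^{-1}$ as desired.

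Finally, the converse is a direct calculation: if $\pi \colon G \to S_n$ is a permutation representation and $m \in \Rn$, then $\rho_s := m\pi_s m^{-1}$ lies in $\AutRn$ as a composition of three lattice automorphisms, and the conjugation trivially gives $\rho_s \rho_t = m\pi_s\pi_t m^{-1} = m\pi_{st}m^{-1} = \rho_{st}$. The main obstacle, and essentially the only nontrivial point, is recognising the crossed homomorphism structure of $f$ and verifying that the divisibility hypothesis of Lemma~\ref{l:cohomology_result} is met in multiplicative form; once those two observations are in place, the statement follows almost mechanically from the groundwork laid in the preceding subsections.
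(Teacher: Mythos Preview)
Your argument is correct and follows essentially the same route as the paper: define $\pi = p \circ \rho$, recognise the $\Rn$-component of $\rho$ as a crossed homomorphism, and invoke Lemma~\ref{l:cohomology_result} to produce $m$. The only cosmetic difference is that the paper routes the application of Lemma~\ref{l:cohomology_result} through the image subgroup $\rho(G)$ via Proposition~\ref{p:bijection_finite_subgroups}, whereas you apply the lemma directly to $G$ acting through $\pi$; your version is a slight streamlining but not a different method.
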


\begin{proof}
Applying Proposition~\ref{p:bijection_finite_subgroups} to $\rho(G)$ and combining this with Lemma~\ref{l:cohomology_result}, $p \colon \rho(G) \to p \circ \rho(G)$ has an inverse of the form $q(\sigma) = m \sigma(m)^{-1} \sigma$ for some $m \in \Rn$ and all $\sigma \in p \circ \rho(G)$. We define $\pi := p \circ \rho$, then for $s \in G$,
\[ \rho_s = (q \circ p)(\rho_s) = q(\pi_s) = m \pi_s(m)^{-1} \pi_s = m \pi_s m^{-1}. \]
This shows the existence of $\pi$. The uniqueness of $\pi$ follows from the uniqueness of the factors in $\Rn$ and $S_n$ in
\[ \rho_s = m \pi_s m^{-1} = [m \pi_s(m)^{-1}] \pi_s. \]
The converse is clear.
\end{proof}

If $\rho$, $\pi$ and $m$ are related as in the above theorem, we will denote this as $\rho \sim (m, \pi)$. Note that, as in Corollary~\ref{c:characterization_finite_subgroups}, $\pi$ is unique, but $m$ is not. Given the permutation representation $\pi$, $m_1$ and $m_2$ induce the same positive representations if and only if $m_1 m_2^{-1} = \pi_s(m_1 m_2^{-1})$ for all $s \in G$.

Recall that if $\rho, \theta \colon G \to \AutRn$ are positive representations, we call them \emph{order equivalent} if there exists an intertwiner $T \in \Aut\Rn$ between $\rho$ and $\theta$. We call them \emph{permutation equivalent} if there exists an intertwiner $\sigma \in S_n$; this implies order equivalence.

\begin{proposition}\label{p:rep_isom_perm_rep}
 Let $G$ be a finite group and $\rho^1 \sim (m_1, \pi^1)$ and $\rho^2 \sim (m_2, \pi^2)$ be two positive representations of $G$ in $\R^n$. Then $\rho^1$ and $\rho^2$ are order equivalent if and only if $\pi^1$ and $\pi^2$ are permutation equivalent.
\end{proposition}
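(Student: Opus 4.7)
The plan is to handle the two directions separately, using the semidirect product structure of $\Aut(\R^n) \cong \Rn \rt S_n$ together with the canonical homomorphism $p \colon \Aut(\R^n) \to S_n$ introduced earlier.

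For the ``if'' direction, suppose $\sigma \in S_n$ satisfies $\sigma \pi^1_s = \pi^2_s \sigma$ for all $s \in G$. I would simply exhibit an explicit intertwiner, namely $T := m_2 \sigma m_1^{-1} \in \Aut(\R^n)$, and verify directly that
\[ T \rho^1_s = m_2 \sigma m_1^{-1} \cdot m_1 \pi^1_s m_1^{-1} = m_2 \sigma \pi^1_s m_1^{-1} = m_2 \pi^2_s \sigma m_1^{-1} = m_2 \pi^2_s m_2^{-1} \cdot m_2 \sigma m_1^{-1} = \rho^2_s T \]
for each $s \in G$, using the hypothesis in the middle step.

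For the ``only if'' direction, suppose $T \in \Aut(\R^n)$ is an order intertwiner between $\rho^1$ and $\rho^2$. The idea is to project out the multiplication part and keep only the permutation part. Concretely, write $T = m \tau$ with $m \in \Rn$ and $\tau \in S_n$ (the unique decomposition from the structure theorem for $\Aut(\R^n)$), and apply the group homomorphism $p \colon \Aut(\R^n) \to S_n$ to the identity $T \rho^1_s = \rho^2_s T$. Since $p$ kills $\Rn$ and is the identity on $S_n$, and since $p(\rho^i_s) = p(m_i \pi^i_s m_i^{-1}) = \pi^i_s$ because $\pi^i_s \in S_n$, this gives $\tau \pi^1_s = \pi^2_s \tau$ for all $s \in G$. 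Hence $\tau \in S_n$ is the required permutation intertwiner.

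There is no real obstacle here; the hard work has already been done in Theorem~\ref{t:characterization_rep_into_Rn} and in the explicit description of $\Aut(\R^n)$ as a semidirect product with its canonical projection $p$. The content of the proposition is essentially that the ``permutation part'' of a positive representation, and of an order intertwiner, is functorial with respect to $p$, which is immediate once one has the semidirect product decomposition in hand.
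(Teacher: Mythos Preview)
Your proof is correct and follows essentially the same approach as the paper. The paper carries out both directions by explicit computation in semidirect product notation $(m,\sigma)$, whereas you use operator notation for the ``if'' direction and invoke the homomorphism $p$ directly for the ``only if'' direction; these are just two presentations of the same argument, with your use of $p$ being slightly more conceptual than reading off the second coordinate from the explicit product.
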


\begin{proof}
 Suppose that $\rho^1$ and $\rho^2$ are order equivalent and let $T = (m, \sigma) \in \AutRn$ be an intertwiner. Then, for all $s \in G$,
\begin{align}
\rho^1_s T &= ( m_1 \pi^1_s(m_1)^{-1}, \pi^1_s ) (m, \sigma) = ( m_1 \pi^1_s(m_1)^{-1} \pi^1_s(m), \pi^1_s \sigma ) \label{e:intertwiner1}\\
T \rho^2_s &= (m, \sigma) ( m_2 \pi^2_s(m_2)^{-1}, \pi^2_s ) = (m \sigma(m_2) \sigma \pi^2_s(m_2)^{-1} , \sigma \pi^2_s), \label{e:intertwiner2}
\end{align}
and since these are equal, $\sigma$ is an intertwiner between $\pi^1$ and $\pi^2$.

Conversely, let $\sigma$ be an intertwiner between $\pi^1$ and $\pi^2$. Then, by taking $m = m_1 \sigma(m_2)^{-1}$ and $T = (m, \sigma) \in \AutRn$, it is easily verified that, for all $s \in G$,
\[ (m_1 \pi^1_s(m_1)^{-1} \pi^1_s(m), \pi^1_s \sigma ) = (m \sigma(m_2) \sigma \pi^2_s(m_2)^{-1} , \sigma \pi^2_s), \]
 and so by \eqref{e:intertwiner1} and \eqref{e:intertwiner2}, $T$ intertwines $\rho^1$ and $\rho^2$.
\end{proof}

We immediately obtain that every positive representation is order equivalent to a permutation representation.

\begin{corollary}\label{c:rep_isom_perm_rep}
Let $G$ be a finite group and let $\rho \sim (m, \pi)$ be a positive representation of $G$ in $\R^n$. Then $\rho$ is order equivalent to $(\bf{1}, \pi)$.
\end{corollary}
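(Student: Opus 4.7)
The plan is to apply Proposition~\ref{p:rep_isom_perm_rep} directly. The two positive representations under comparison are $\rho^1 := \rho \sim (m,\pi)$ and $\rho^2 \sim (\mathbf{1},\pi)$. By construction they share the same permutation part $\pi$, so the identity permutation $\id \in S_n$ is a (trivial) permutation intertwiner between $\pi^1 = \pi$ and $\pi^2 = \pi$. Hence they are permutation equivalent, and by the ``if'' direction of Proposition~\ref{p:rep_isom_perm_rep}, $\rho$ and $(\mathbf{1},\pi)$ are order equivalent.

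For completeness one could also exhibit the intertwiner explicitly, following the construction in the proof of Proposition~\ref{p:rep_isom_perm_rep}: take $\sigma = \id$, set $T := (m\,\id(\mathbf{1})^{-1},\id) = (m, \id) \in \AutRn$, i.e., $T$ is just multiplication by $m$. A direct check then gives
\[
T(\mathbf{1},\pi_s) = (m\cdot \id(\mathbf{1})\cdot \id\circ\pi_s(\mathbf{1})^{-1},\,\id\circ\pi_s) = (m,\pi_s),
\]
while
\[
(m\pi_s(m)^{-1},\pi_s)\,T = (m\pi_s(m)^{-1}\pi_s(m),\pi_s\circ\id) = (m,\pi_s),
\]
confirming $\rho_s T = T(\mathbf{1},\pi_s)$ for all $s \in G$.

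There is no real obstacle here; the corollary is essentially a restatement of the special case of Proposition~\ref{p:rep_isom_perm_rep} when the two permutation parts literally coincide. The only thing worth noting is that $T = (m,\id)$ is indeed a lattice automorphism of $\R^n$ (since $m \in \Rn$, i.e., has strictly positive entries), which is exactly what is needed for order equivalence.
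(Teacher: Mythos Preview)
Your proof is correct and matches the paper's approach: the corollary is stated there as an immediate consequence of Proposition~\ref{p:rep_isom_perm_rep}, with no separate argument given. Your explicit verification that $T=(m,\id)$ serves as the intertwining lattice automorphism is a welcome addition but not required.
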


\begin{remark}
The method in this subsection also yields a description of the homomorphisms from a finite group into a semidirect product $N\rt K$ with $N$ torsion-free and $H^1(H,N)$ trivial for all finite subgroups $H \subset K$, but we are not aware of a reference for this fact.
\end{remark}

Using Corollary~\ref{c:rep_isom_perm_rep}, we obtain our decomposition theorem.

\begin{theorem}\label{t:pos_rep_sum_irr_reps}
 Let $G$ be a finite group and $\rho \colon G \to \Aut(E)$ a positive representation in a nonzero finite dimensional Archimedean Riesz space $E$. Let $\{B_i\}_{i \in I}$ be the set of irreducible invariant bands in $E$. Then $E = \oplus_{i \in I} B_i$. Furthermore, any invariant band is a direct sum of $B_i$'s.
\end{theorem}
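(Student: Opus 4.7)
The plan is to reduce to the case of a permutation representation and then translate everything into combinatorics of $G$-orbits on $\{1,\dots,n\}$.

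First I would invoke \cite[Theorem~26.11]{riesz1} to obtain a lattice isomorphism $E\cong\R^n$; under this isomorphism $\rho$ becomes a positive representation in $\R^n$, and Corollary~\ref{c:rep_isom_perm_rep} allows me to replace it, up to order equivalence, by a permutation representation $\pi\colon G\to S_n\subset\AutRn$. Since order equivalences are lattice isomorphisms, they carry bands to bands, preserve invariance, preserve irreducibility of invariant bands, and preserve order direct sum decompositions; so both halves of the conclusion are insensitive to this reduction, and I may assume from the outset that $E=\R^n$ and $\rho_s e_i = e_{\pi_s(i)}$.

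Next I would identify the band structure of $\R^n$ explicitly. For any subset $S\subset\{1,\dots,n\}$ set $B_S:=\operatorname{span}\{e_i:i\in S\}$; it is elementary (the atoms of $\R^n$ are exactly the positive multiples of the $e_i$) that $S\mapsto B_S$ is a bijection between the subsets of $\{1,\dots,n\}$ and the bands of $\R^n$. From $\rho_s e_i=e_{\pi_s(i)}$ it is immediate that $B_S$ is $\rho$-invariant precisely when $S$ is stable under the $G$-action on $\{1,\dots,n\}$ induced by $\pi$. Consequently $B_S$ is an irreducible invariant band exactly when $S$ is a minimal nonempty $G$-stable subset of $\{1,\dots,n\}$, that is, exactly when $S$ is a $G$-orbit.

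Finally, since $\{1,\dots,n\}$ is the disjoint union of its $G$-orbits $O_1,\dots,O_k$, one obtains
\[
\R^n=\bigoplus_{j=1}^{k}B_{O_j},
\]
which is precisely the asserted decomposition $E=\bigoplus_{i\in I}B_i$. For the second assertion, an arbitrary invariant band corresponds to a $G$-stable subset $S$, and any such $S$ is a union of orbits (if $i\in S$ then $G\!\cdot\! i\subset S$); hence the invariant band is the order direct sum of those $B_{O_j}$ with $O_j\subset S$, a subfamily of the $B_i$.

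There is no substantial obstacle here: everything follows once the reduction to a permutation representation is in place, and the only point that deserves care is verifying that order equivalence really does transport all of the relevant structure (bands, invariance, irreducibility, direct sums) between $E$ and $\R^n$, so that the combinatorial description on $\R^n$ genuinely solves the problem on $E$.
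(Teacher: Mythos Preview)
Your proposal is correct and follows essentially the same approach as the paper: reduce to $\R^n$ via \cite[Theorem~26.11]{riesz1}, then to a permutation representation via Corollary~\ref{c:rep_isom_perm_rep}, and read off the result from the orbit decomposition of $\{1,\dots,n\}$. Your write-up is simply a more detailed version of the paper's terse argument, with the transport-of-structure step made explicit.
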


\begin{proof}
We may assume that $E=\R^n$, where bands are just linear spans of a number of standard basis vectors. By Corollary~\ref{c:rep_isom_perm_rep}, we may assume that $\rho$ is a permutation representation, which is induced by a group action on the set of basis elements. It is then clear that the irreducible invariant bands correspond to the orbits of this group action, and the invariant bands to unions of orbits. This immediately gives the decomposition of $\rho$ into irreducible positive representations, and the description of the invariant bands.
\end{proof}

We will now give a description of what could be called the finite dimensional Archimedean part of the order dual of a finite group. Note that Theorems~\ref{t:principal_band_irreducible_finite_dim} and Theorem~\ref{t:equivalencies} imply that a number of positive representations, irreducible in an appropriate way, are automatically in finite dimensional Archimedean spaces. Hence they fall within the scope of the next result, which is formulated in terms of a function lattice in order to emphasize the similarity with \cite[Theorem~III.10.4]{schaefer}.

\begin{theorem}\label{t:order_dual}
 Let $G$ be a finite group. If $H \subset G$ is a subgroup, let $(e_{tH})_{tH \in G/H}$ be the canonical basis for the finite dimensional Riesz space $C(G/H)$, defined by $e_{tH}(sH)=\delta_{tH,sH}$, for $tH,\,sH\in G/H$. Let $\pi^H \colon G \to \Aut(C(G/H))$ be the canonical positive representation corresponding to the action of $G$ on $G/H$, so that $\pi^H_s e_{tH} := e_{stH}$ for $s,t \in G$. Then, whenever $H_1$ and $H_2$ are conjugate, $\pi^{H_1}$ and $\pi^{H_2}$ are order equivalent, and the map
\[ [H] \mapsto [\pi^H] \]
is a bijection between the conjugacy classes of subgroups of $G$ and the order equivalence classes of irreducible positive representations of $G$ in nonzero finite dimensional Archimedean Riesz spaces.
\end{theorem}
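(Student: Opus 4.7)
The plan is to reduce the statement to the bijection, provided by Lemma~\ref{t:characterization_transitive_actions}, between conjugacy classes of subgroups of $G$ and isomorphism classes of transitive $G$-spaces, and then to transfer it to the representation side by means of Corollary~\ref{c:rep_isom_perm_rep} and Proposition~\ref{p:rep_isom_perm_rep}.

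First I would check that the map is well-defined and produces irreducible representations. The space $C(G/H)$ is lattice isomorphic to $\R^{|G/H|}$ with the $e_{tH}$ as standard basis, and under this identification $\pi^H$ is visibly a permutation representation. The proof of Theorem~\ref{t:pos_rep_sum_irr_reps} shows that the invariant bands of a permutation representation on $\R^n$ are precisely the linear spans of unions of orbits of the induced action on the basis. Since $G/H$ is a transitive $G$-space, there is only one orbit, so the only invariant bands of $\pi^H$ are $\{0\}$ and $C(G/H)$; hence $\pi^H$ is irreducible. If $H_2 = r H_1 r^{-1}$ for some $r \in G$, then the $G$-space isomorphism $sH_1 \mapsto sr^{-1} H_2$ from the proof of Lemma~\ref{t:characterization_transitive_actions} extends linearly, via $e_{sH_1} \mapsto e_{sr^{-1}H_2}$, to a lattice isomorphism $C(G/H_1) \to C(G/H_2)$ which intertwines $\pi^{H_1}$ and $\pi^{H_2}$, so $[\pi^{H_1}] = [\pi^{H_2}]$.

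For surjectivity, let $\rho \colon G \to \Aut(E)$ be an irreducible positive representation in a nonzero finite dimensional Archimedean Riesz space $E$. Then $E \cong \R^n$ for some $n \geq 1$ by \cite[Theorem~26.11]{riesz1}, and by Corollary~\ref{c:rep_isom_perm_rep}, $\rho$ is order equivalent to a permutation representation $\pi$ of $G$ in $\R^n$. The induced action of $G$ on the standard basis $\{e_1, \ldots, e_n\}$ must be transitive, for otherwise the linear span of any single orbit would be a nontrivial invariant band, contradicting the irreducibility of $\pi$. Letting $H$ be the stabilizer of $e_1$, the orbit map $s e_1 \mapsto sH$ is a well-defined $G$-space isomorphism between $\{e_1, \ldots, e_n\}$ and $G/H$, and it extends linearly to a lattice isomorphism $\R^n \to C(G/H)$ intertwining $\pi$ and $\pi^H$. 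Therefore $[\rho] = [\pi^H]$.

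For injectivity, suppose $\pi^{H_1}$ and $\pi^{H_2}$ are order equivalent. The existence of a lattice isomorphism between $C(G/H_1)$ and $C(G/H_2)$ forces $|G/H_1| = |G/H_2| =: n$, and after identifying both spaces with $\R^n$ by ordering the respective bases, both $\pi^{H_1}$ and $\pi^{H_2}$ become permutation representations $G \to S_n \subset \Aut(\R^n)$. Proposition~\ref{p:rep_isom_perm_rep} then guarantees that they are permutation equivalent, and the intertwining $\sigma \in S_n$ is precisely a $G$-space isomorphism $G/H_1 \to G/H_2$. Lemma~\ref{t:characterization_transitive_actions} then yields $[H_1] = [H_2]$. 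The main obstacle is really only the clean setup of this last identification; every remaining ingredient is either tautological or already established in the preceding subsections.
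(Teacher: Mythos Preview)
Your proof is correct and follows essentially the same route as the paper's: well-definedness via Lemma~\ref{t:characterization_transitive_actions}, surjectivity via Corollary~\ref{c:rep_isom_perm_rep} plus transitivity of the basis action, and injectivity via Proposition~\ref{p:rep_isom_perm_rep} followed by Lemma~\ref{t:characterization_transitive_actions}. You supply somewhat more detail than the paper does---in particular you explicitly verify that $\pi^H$ is irreducible and you write out the intertwining lattice isomorphisms---but the underlying strategy is identical.
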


\begin{proof}
It follows easily from Theorem~\ref{t:characterization_transitive_actions} that the map is well-defined.
As a consequence of Corollary~\ref{c:rep_isom_perm_rep}, every nonzero finite dimensional positive Archimedean representation is order equivalent to a permutation representation, arising from an action of $G$ on $\{1, \ldots, n\}$ for some $\geq 1$. Since the irreducibility of $\pi$ is then equivalent to the transitivity of this group action, this shows that the map is surjective. As to injectivity, suppose that $\pi^{H_1}$ and $\pi^{H_2}$ are order equivalent, for subgroups $H_1,H_2$ of $G$. Let $n=|G:H_1|=|G:H_2|$, and consider $\R^n$ with standard basis $\{e_1,\ldots,e_n\}$. Choose a bijection between the canonical basis for $C(G/H_1)$ and $\{e_1,\ldots,e_n\}$, and likewise for the canonical basis of $C(G/H_2)$. This gives a lattice isomorphism between $C(G/H_1)$ and $\R^n$, and similarly for $C(G/H_2)$. After transport of structure, $G$ has two positive representations on $\R^n$ which are order equivalent by assumption, and which originate from two permutation representations on the same set $\{1,\ldots,n\}$. As a consequence of Proposition~\ref{p:rep_isom_perm_rep}, the permutation parts of these positive representations are permutation equivalent, i.e., the two $G$-spaces, consisting of $\{1,\ldots,n\}$ and the respective $G$-actions, are isomorphic $G$-spaces. Consequently, $G/H_1$ and $G/H_2$ are isomorphic $G$-spaces, and then Lemma~\ref{t:characterization_transitive_actions} shows that $H_1$ and $H_2$ are conjugate.
\end{proof}

If $n \in \Z_{\geq 0}$ and $\rho$ is a positive representation, then $n \rho$ denotes the $n$-fold order direct sum of $\rho$. Combining the above theorem with Theorem~\ref{t:pos_rep_sum_irr_reps}, we immediately obtain the following, showing how the representations under consideration are built from canonical actions on function lattices on transitive $G$-spaces.

\begin{corollary}\label{c:decomposition_with_multiplicities}
 Let $G$ be a finite group and let $H_1, \ldots, H_k$ be representatives of the conjugacy classes of subgroups of $G$. Let $E$ be a finite dimensional Archimedean Riesz space and let $\rho \colon G \to \Aut(E)$ be a positive representation. Then, using the notation of Theorem~\ref{t:order_dual}, there exist unique $n_1, \ldots, n_k \in \Z_{\geq 0}$ such that $\rho$ is order equivalent to
\[ n_1 \pi^{H_1} \oplus \cdots \oplus n_k \pi^{H_k}. \]
\end{corollary}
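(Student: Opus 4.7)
The plan is to derive the statement by directly combining the two main results just established in this section: Theorem~\ref{t:pos_rep_sum_irr_reps}, which canonically decomposes $E$ into its irreducible invariant bands, and Theorem~\ref{t:order_dual}, which classifies irreducible positive representations up to order equivalence by conjugacy classes of subgroups.

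For existence, I would apply Theorem~\ref{t:pos_rep_sum_irr_reps} to write $E = \bigoplus_{i \in I} B_i$, where $\{B_i\}_{i \in I}$ is the (necessarily finite, as $\dim E < \infty$) collection of irreducible invariant bands in $E$. For each $i$, the restriction $\rho|_{B_i}$ is an irreducible positive representation on a nonzero finite dimensional Archimedean Riesz space. By Theorem~\ref{t:order_dual}, there is a well-defined index $j(i) \in \{1,\ldots,k\}$ such that $\rho|_{B_i}$ is order equivalent to $\pi^{H_{j(i)}}$. Setting $n_j := \#\{i \in I : j(i)=j\}$ and taking the order direct sum of the chosen lattice isomorphisms $B_i \to C(G/H_{j(i)})$, one obtains an order equivalence between $\rho$ and $\bigoplus_{j=1}^k n_j \pi^{H_j}$.

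For uniqueness, suppose $(n_j)$ and $(n'_j)$ both yield order equivalences for $\rho$; then $\bigoplus_j n_j \pi^{H_j}$ is order equivalent to $\bigoplus_j n'_j \pi^{H_j}$ via some lattice isomorphism $T$ intertwining the two representations. A lattice isomorphism maps bands to bands, and, being an intertwiner, it maps invariant bands to invariant bands and hence minimal (i.e., irreducible) invariant bands to minimal invariant bands. In $\bigoplus_j n_j \pi^{H_j}$ the irreducible invariant bands are precisely the $n_j$ summands lattice-isomorphic to $C(G/H_j)$, since each $\pi^{H_j}$ is itself irreducible (the $G$-action on $G/H_j$ being transitive); moreover, the restriction of the ambient representation to such a summand is order equivalent to $\pi^{H_j}$. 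Since Theorem~\ref{t:order_dual} asserts that $\pi^{H_j}$ and $\pi^{H_{j'}}$ are order equivalent only when $j = j'$, the bijection induced by $T$ on irreducible invariant bands must preserve the index $j$, yielding $n_j = n'_j$ for every $j$.

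The only point that requires any care is the step in the uniqueness argument where one identifies the irreducible invariant bands of $\bigoplus_j n_j \pi^{H_j}$ and argues that $T$ maps them bijectively in a class-preserving way; this however is immediate from the fact that lattice isomorphisms preserve the band structure together with the already-proved canonicity in Theorem~\ref{t:pos_rep_sum_irr_reps} and the injectivity in Theorem~\ref{t:order_dual}.
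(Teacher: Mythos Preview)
Your argument is correct and is exactly the route the paper intends: the paper states the corollary as an immediate consequence of Theorem~\ref{t:pos_rep_sum_irr_reps} and Theorem~\ref{t:order_dual} without writing out a proof, and your proposal simply spells out the existence and uniqueness details that this combination yields.
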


\subsection{Linear equivalence and order equivalence}

If two unitary group representations are intertwined by a bounded invertible operator, they are also intertwined by a unitary operator \cite[Section~2.2.2]{dixmier}. We will now investigate the corresponding natural question in the finite dimensional ordered setting: if two positive finite dimensional Archimedean representations are intertwined by an invertible linear map, are they order equivalent? By character theory, see for example \cite[Theorem~XVIII.2.3]{lang}, two representations over the real numbers are linearly equivalent if and only if they have the same character. The following example, taken from the introduction of \cite{larsen}, therefore settles the matter.

\begin{example}\label{e:non_order_equivalence_example}
 Let $G$ be the group $\Z / 2\Z \times \Z / 2\Z$, and conser the permutation representations $\pi^1, \pi^2 \colon G \to \Aut(\R^6)$ determined by
\[ \pi^1_{(0,1)} := (12)(34) \quad \pi^1_{(1,0)} := (13)(24) \quad \pi^2_{(0,1)} := (12)(34) \quad \pi^2_{(1,0)} := (12)(56). \]
Then, as is easily verified, $\pi^1$ and $\pi^2$ have the same character, and so they are linearly equivalent. However, they are not order equivalent, since by examining the orbits of standard basis elements it follows that the first representation splits into three irreducible positive representations of dimensions 1, 1, and 4, and the second splits into three irreducible positive representations of dimension 2 each.
\end{example}

Thus, in general, linear equivalence (equivalently: equality of characters) of positive representations does not imply order equivalence. One might then try to narrow down the field: is perhaps true that two \emph{irreducible} positive representations, which are linearly equivalent, are order equivalent? In view of Theorem~\ref{t:order_dual} and Theorem~\ref{t:characterization_transitive_actions}, this is asking whether a linear equivalence of the positive representations corresponding to two transitive $G$-spaces (which is equivalent to equality, for each group element, of the number of fixed points in the two spaces) implies that these $G$-spaces are isomorphic. The answer, again, is negative, but counterexamples are now more intricate to construct than above, and the reader is referred to \cite[Theorem~1]{desmitlenstra}, providing an abundance of such counterexamples.

On the positive side, in some cases linear equivalence of irreducible positive representations does imply order equivalence, as shown by the next result.

\begin{lemma}
 Let $G$ be a finite group, let $N$ be a normal subgroup, and let $\pi^N \colon G \to \Aut(C(G/N))$ be the irreducible positive representations as in Theorem~\ref{t:order_dual}. Then an irreducible positive representation which is linearly equivalent with $\pi^N$ is in fact order equivalent with $\pi^N$.
\end{lemma}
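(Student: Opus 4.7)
My plan is to reduce the question to a statement about subgroups via the classification in Theorem~\ref{t:order_dual}, and then exploit the normality of $N$ through a character computation.

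First, by Theorem~\ref{t:order_dual}, the given irreducible positive representation, call it $\rho$, is order equivalent to $\pi^H$ for some subgroup $H \le G$, and the problem reduces to showing that $[H]=[N]$; since $N$ is normal, its conjugacy class is the singleton $\{N\}$, so it suffices to prove $H=N$. Linear equivalence is preserved by order equivalence, so I may assume that $\pi^H$ and $\pi^N$ are linearly equivalent; by standard character theory for finite groups in characteristic zero, this is equivalent to the equality of their characters.

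Next I would compute these characters explicitly. For any subgroup $K \le G$, the trace of $\pi^K_s$ is the number of cosets fixed by $s$, i.e.,
\[
\chi_{\pi^K}(s) = \left|\{ tK \in G/K : t^{-1}st \in K \}\right|.
\]
Using normality of $N$, the condition $t^{-1}st \in N$ is equivalent to $s \in tNt^{-1}=N$, so
\[
\chi_{\pi^N}(s) = \begin{cases} |G/N| & \text{if } s \in N,\\ 0 & \text{otherwise.}\end{cases}
\]
Taking $s=e$ in the equality $\chi_{\pi^H}=\chi_{\pi^N}$ gives $|G/H|=|G/N|$, hence $|H|=|N|$. For any $s \in N$, the equality $\chi_{\pi^H}(s)=|G/H|$ forces every coset $tH$ to be fixed, i.e., $t^{-1}st \in H$ for every $t\in G$; taking $t=e$ this gives $s \in H$, so $N \subseteq H$. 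Combined with $|H|=|N|$, this yields $H=N$, completing the proof.

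The main conceptual point—and the only step that uses the normality hypothesis in an essential way—is the explicit computation showing $\chi_{\pi^N}$ vanishes outside $N$; without normality one only gets conjugacy of $H$ to a subgroup of $N$ after conjugation by varying $t$, and the Desmit--Lenstra counterexamples cited just before the lemma show this is genuinely insufficient in general. No real obstacle is expected once the characters are in hand; everything reduces to elementary set counting.
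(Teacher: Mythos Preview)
Your proof is correct and follows essentially the same route as the paper: reduce via Theorem~\ref{t:order_dual} to comparing $\pi^H$ with $\pi^N$, compute the character of $\pi^N$ as $|G:N|\,\mathbf{1}_N$ using normality, and combine a dimension count with an inclusion between $H$ and $N$. The only cosmetic difference is the direction of the inclusion: the paper observes that $\chi_{\pi^H}$ is nonzero on $H$ (since the trivial coset is fixed) and hence $H\subset N$, whereas you argue that $\chi_{\pi^H}(s)=|G/H|$ for $s\in N$ forces $N\subset H$; either way the conclusion $H=N$ follows from $|H|=|N|$.
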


\begin{proof}
Passing to an order equivalent model we may, in view of Theorem~\ref{t:order_dual}, assume that the other irreducible positive representation is $\pi^H$, for a subgroup $H$ of $G$. The fact that $G/N$ is a group implies easily that the character of $\pi^N$ equals $|G:N|\mathbf{1}_N$. Since the character of $\pi^H$, which is equal to that of $\pi^N$ by their linear equivalence, is certainly nonzero on $H$, we see that $H\subset N$. On the other hand, equality of dimensions yields $|G:N|=|G:H|$, hence $|N|=|H|$. We conclude that $H = N$.
\end{proof}

Combining Theorem~\ref{t:order_dual} with the previous lemma yields the following.

\begin{corollary}\label{c:dedekindgroups}
 Let $G$ be a finite group with only normal subgroups. If two finite dimensional irreducible positive representations of $G$ are linearly equivalent (equivalently: have the same character), they are order equivalent.
\end{corollary}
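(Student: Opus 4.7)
The plan is a direct combination of the two preceding results, with the hypothesis on $G$ serving only to guarantee that the previous lemma applies to any irreducible positive representation of $G$.

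First I would invoke Theorem~\ref{t:order_dual} to reduce to a canonical form: any finite dimensional irreducible positive representation of $G$ is order equivalent to $\pi^{H}$ for some subgroup $H\subset G$, where $H$ is unique up to conjugacy. So suppose $\rho^{1}$ and $\rho^{2}$ are two finite dimensional irreducible positive representations of $G$ that are linearly equivalent. Choose subgroups $H_{1}, H_{2}\subset G$ so that $\rho^{i}$ is order equivalent to $\pi^{H_{i}}$ for $i=1,2$. Since order equivalence is in particular linear equivalence, $\pi^{H_{1}}$ and $\pi^{H_{2}}$ are linearly equivalent as well.

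Now I would use the hypothesis on $G$: every subgroup of $G$ is normal, so $H_{1}$ is a normal subgroup $N$ of $G$. This is precisely the setting of the previous lemma applied to $\pi^{N}=\pi^{H_{1}}$ and the irreducible positive representation $\pi^{H_{2}}$ linearly equivalent to it. That lemma yields that $\pi^{H_{1}}$ and $\pi^{H_{2}}$ are order equivalent, and hence so are $\rho^{1}$ and $\rho^{2}$ by transitivity of order equivalence.

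There is no real obstacle here, since the bijectivity in Theorem~\ref{t:order_dual} and the normal-subgroup lemma already do all the work. The only thing to check is that the chain of equivalences (linear equivalence $\Leftrightarrow$ equal characters, and then order equivalence via the lemma) is applied in the correct direction; the parenthetical equivalence of linear equivalence with equality of characters for real representations is the classical character-theoretic statement referenced earlier in the section.
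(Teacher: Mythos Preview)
Your proposal is correct and matches the paper's approach exactly: the paper simply states that the corollary follows by combining Theorem~\ref{t:order_dual} with the preceding lemma, and your argument spells out precisely that combination. The reduction to $\pi^{H_1}$ and $\pi^{H_2}$ via Theorem~\ref{t:order_dual}, followed by invoking the normality of $H_1$ to apply the lemma, is the intended route.
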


Thus, for such groups (so-called Dedekind groups), and in particular for abelian groups, the classical correspondence between characters and irreducible representations survives in an ordered context---where, naturally, ``irreducible" has a different meaning. However, as Example~\ref{e:non_order_equivalence_example} shows, already for abelian groups this correspondence breaks down for reducible positive representations.

\section{Induction}\label{s:Induction}

In this section we will examine the theory of induction in the ordered setting from a categorical point of view. It turns out that the results are to a large extent analogous to the linear case as covered in many sources, e.g., \cite[Section~XVIII.7]{lang}. Still, it seems worthwhile to go through the motions, as a preparation for future more analytical constructions, and in doing so we then also obtain a slight bonus (the original ordered module is embedded in the induced one as a sublattice, even though this was not required), keep track of several notions of irreducibility, and also observe that Frobenius reciprocity holds only partially. Since we do not consider topological issues at the moment, we are mostly interested in the case where the group is finite, but the theory is developed at little extra cost in general for arbitrary groups and subgroups. Our approach is thus slightly more general than, e.g., the approach in \cite{lang}, as we do not require our groups to be finite or our subgroups to be of finite index.

For the rest of the section, $G$ is a not necessarily finite group, $H$ is a subgroup of $G$, not necessarily finite or of finite index, $R$ is a system of representatives of $G / H$, and the Riesz spaces are not assumed to be finite dimensional. The only finiteness condition is in Corollary \ref{c:induced_by_primitive}, where $G$ is assumed to be finite.

\subsection{Definitions and basic properties}

A pair $(E, \rho)$, where $E$ is a Riesz space and $\rho \colon G \to \Aut(E)$ is a positive representation, is called an \emph{ordered $G$-module}. In this notation, we will often omit the representation $\rho$. If $E$ is an ordered $G$-module, it is also an ordered $H$-module by restricting the representation to $H$. If $(E, \rho)$ and $(F, \theta)$ are ordered $G$-modules, then the positive cone of positive intertwiners between $\rho$ and $\theta$ will be denoted by $\Hom_G(E, F)$. Two ordered $G$-modules are \emph{isomorphic ordered $G$-modules} if there exists an intertwining lattice isomorphism.

\begin{defn}\label{d:def_induced_rep}
 Let $F$ be an ordered $H$-module. A pair $(\Ind^G_H(F), j)$, where $\Ind^G_H(F)$ is an ordered $G$-module and $j \in \Hom_H(F, \Ind^G_H(F))$ is called an \emph{induced ordered module} of $F$ from $H$ to $G$ if it satisfies the following universal property:

For any ordered $G$-module $E$ and $T \in \Hom_H(F, E)$, there is a unique $\overline{T} \in \Hom_G(\Ind^G_H(F), E)$ such that $T = \overline{T} \circ j$, i.e., such that the following diagram is commutative:
\[ \xymatrix
{
F \ar[rd]_j \ar[rr]^T &  & E \\
 & \Ind^G_H(F) \ar[ru]_{\overline{T}} & } \]
If $\theta$ is the positive representation of $H$ turning $F$ into an ordered $H$-module, then the positive representation of $G$ turning $\Ind^G_H(F)$ into an ordered $G$-module will be denoted by $\Ind_H^G(\theta)$ and will be called the \emph{induced positive representation} of $\theta$ from $H$ to $G$.
\end{defn}

First we will show, by the usual argument, that the induced ordered module is unique up to isomorphism of ordered $G$-modules.

\begin{lemma}\label{l:induced_unique}
 Let $F$ be an ordered $H$-module and let $(E_1, j_1)$ and $(E_2, j_2)$ be induced ordered modules of $F$ from $H$ to $G$. Then $E_1$ and $E_2$ are isomorphic as ordered $G$-modules.
\end{lemma}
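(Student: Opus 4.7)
The plan is to run the standard universal property argument, the only subtlety being to verify that the resulting $G$-equivariant bijection is actually a lattice isomorphism (and not merely a positive linear isomorphism).

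First I would apply the universal property of $(E_1,j_1)$ to the $H$-intertwiner $j_2 \in \Hom_H(F,E_2)$, obtaining a unique $\overline{j_2} \in \Hom_G(E_1,E_2)$ with $\overline{j_2} \circ j_1 = j_2$. Symmetrically, applying the universal property of $(E_2,j_2)$ to $j_1 \in \Hom_H(F,E_1)$ yields a unique $\overline{j_1} \in \Hom_G(E_2,E_1)$ with $\overline{j_1} \circ j_2 = j_1$.

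Next I would show these are mutually inverse. The composition $\overline{j_1} \circ \overline{j_2} \in \Hom_G(E_1,E_1)$ satisfies $(\overline{j_1} \circ \overline{j_2}) \circ j_1 = \overline{j_1} \circ j_2 = j_1$, and the identity $\id_{E_1} \in \Hom_G(E_1,E_1)$ satisfies the same relation. Applying the uniqueness clause of the universal property of $(E_1,j_1)$ to the $H$-intertwiner $j_1 \in \Hom_H(F,E_1)$ (with $E_1$ now playing the role of the test $G$-module $E$) forces $\overline{j_1} \circ \overline{j_2} = \id_{E_1}$. The symmetric argument gives $\overline{j_2} \circ \overline{j_1} = \id_{E_2}$.

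The step I would expect to require a brief comment is upgrading $\overline{j_2}$ from a positive $G$-equivariant linear bijection to a lattice isomorphism. This is immediate from the observation that $\overline{j_2}$ and its inverse $\overline{j_1}$ are both positive: a positive linear bijection with positive inverse preserves finite suprema and infima, because $x \leq y$ in $E_1$ is equivalent to $\overline{j_2}(x) \leq \overline{j_2}(y)$ in $E_2$, so $\overline{j_2}$ is an order isomorphism and hence a lattice isomorphism. Thus $\overline{j_2}$ witnesses that $E_1$ and $E_2$ are isomorphic as ordered $G$-modules, completing the proof.
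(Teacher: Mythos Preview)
Your argument is correct and follows the same universal-property route as the paper. The only difference is that you spell out why the resulting bipositive $G$-equivariant bijection is a lattice isomorphism, a point the paper leaves implicit when it concludes that $\overline{j_2}$ is an isomorphism of ordered $G$-modules.
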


\begin{proof}
 Using the universal property, we obtain the unique maps $\overline{j_1} \in \Hom_G(E_2, E_1)$ satisfying $j_1 = \overline{j_1} \circ j_2$ and $\overline{j_2} \in \Hom_G(E_1, E_2)$ satisfying $j_2 = \overline{j_2} \circ j_1$. It follows that
\[ j_1 = \overline{j_1} \circ j_2 = \overline{j_1} \circ \overline{j_2} \circ j_1. \]
 Now consider the ordered $G$-module $E_1$, and apply its universal property to itself - we obtain the \emph{unique} map $\id_{E_1} \in \Hom_G(E_1, E_1)$ such that $j_1 = \id_{E_1} \circ j_1$. Together with the above equation, this shows that $\overline{j_1} \circ \overline{j_2} = \id_{E_1}$. Similarly we obtain $\overline{j_2} \circ \overline{j_1} = \id_{E_2}$, and so $\overline{j_2}$ is an isomorphism of ordered $G$-modules.
\end{proof}

We will now construct the induced ordered module, which is the usual induced module, but now with an additional lattice structure. Let $(F, \theta)$ be an ordered $H$-module. We define the ordered vector space
\begin{equation}\label{e:definition_induced_space}
 \tilde{E} := \{ f: G \to F| \; f(st) = \theta_{t^{-1}} f(s) \; \forall s \in G, \; \forall t \in H \},
\end{equation}
with pointwise ordering. Using that $\theta_{t^{-1}}$ is a lattice isomorphism for $t \in H$, one easily verifies that $\tilde{E}$ is a Riesz space, with pointwise lattice operations. Furthermore, if $S \subset G$ is a subset, then we define the subset
\begin{equation}\label{e:definition_induced_space_ideal}
 E_S := \{ f \in \tilde{E}\; | \; \supp(f) \subset S \}.
\end{equation}
Let $\rho: G \to \Aut(\tilde{E})$ be defined by $(\rho_s f)(u) := f(s^{-1}u)$, for $s,u \in G$. Then $\rho$ is a positive representation turning $\tilde{E}$ into an ordered $G$-module. Moreover, for $s \in G$, $\supp(\rho_s f) = s \cdot \supp(f)$, so $\rho_s E_H = E_{sH}$. Now we define the $\rho$-invariant Riesz subspace
\begin{equation}\label{e:induced_direct_sum}
E := \bigoplus_{r \in R} E_{rH} = \bigoplus_{r \in R} \rho_r E_H \subset \tilde{E}.
\end{equation}
For $x \in F$, define a function $j(x) \colon G \to F$ by $j(x)(t) := \theta_{t^{-1}} x$ for $t \in H$, and $j(x)(t) := 0$ for $t \notin H$. It is routine to verify that $j(x) \in E_H$ for all $x \in F$, and that $j \colon F \to E_H$ is an isomorphism of ordered $H$-modules between $(F, \theta)$ and $(E_H, \rho)$. This last fact and the fact that $E = \oplus_{r \in R} \rho_r F$ as an order direct sum will be used to prove that $(E,j)$ actually satisfies the desired universal property.


\begin{lemma}\label{l:induced_module}
Let $(E, \rho)$ be an ordered $G$-module and let $F$ be a Riesz subspace of $E$ which is invariant under the restricted representation $\theta = \rho|_H$ of $H$ in $E$, and such that $E = \oplus_{r \in R} \rho_r F$ as an order direct sum. Let $j \colon F \to E$ be the embedding. Then $(E, j)$ is an induced ordered module of $F$ from $H$ to $G$.
\end{lemma}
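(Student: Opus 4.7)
The plan is to verify the universal property directly from the decomposition. Without loss of generality we may assume $e \in R$, so that $F$ itself is one of the summands $\rho_r F$ and $j(x) = \rho_e x$ for $x \in F$.

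Given any ordered $G$-module $(E', \rho')$ and any $T \in \Hom_H(F, E')$, uniqueness of a factorisation is immediate: if $\overline{T} \in \Hom_G(E, E')$ satisfies $T = \overline{T} \circ j$, then for each $r \in R$ and $x \in F$ we must have $\overline{T}(\rho_r x) = \rho'_r \overline{T}(x) = \rho'_r T(x)$, which by linearity and the decomposition $E = \bigoplus_{r \in R} \rho_r F$ determines $\overline{T}$ on all of $E$.

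For existence, I would define $\overline{T} \colon E \to E'$ on the unique decomposition $y = \sum_{r \in R} \rho_r x_r$ of $y \in E$ (with $x_r \in F$ and only finitely many nonzero) by
\[
\overline{T}(y) := \sum_{r \in R} \rho'_r T(x_r).
\]
Linearity follows from uniqueness of the decomposition. Positivity uses crucially that $E = \bigoplus_{r \in R} \rho_r F$ is an \emph{order} direct sum: if $y \geq 0$, then each component $\rho_r x_r \geq 0$ in $E$, and since $\rho_{r^{-1}}$ is a lattice automorphism, $x_r = \rho_{r^{-1}}(\rho_r x_r) \geq 0$ in $F$. Then $\rho'_r T(x_r) \geq 0$ by positivity of $T$ and $\rho'_r$, whence $\overline{T}(y) \geq 0$.

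The main computational step is $G$-equivariance. For $s \in G$ and $r \in R$, write $sr = r_s h_s$ with unique $r_s \in R$ and $h_s \in H$. Then for $x \in F$, $\rho_s \rho_r x = \rho_{r_s} \rho_{h_s} x$, and $\rho_{h_s} x \in F$ by $H$-invariance of $F$. Using that $T$ intertwines $\theta$ with $\rho'|_H$,
\[
\overline{T}(\rho_s \rho_r x) = \rho'_{r_s} T(\rho_{h_s} x) = \rho'_{r_s} \rho'_{h_s} T(x) = \rho'_s \rho'_r T(x) = \rho'_s \overline{T}(\rho_r x),
\]
and extending linearly gives $\overline{T} \rho_s = \rho'_s \overline{T}$ on $E$. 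Finally, for $x \in F$, $\overline{T}(j(x)) = \overline{T}(\rho_e x) = \rho'_e T(x) = T(x)$, completing the verification. The main obstacle is not deep but demands care: one must exploit that the decomposition is an order direct sum (not merely a vector-space direct sum) to extract the positivity, and the coset bookkeeping $sr = r_s h_s$ must be handled precisely in order to upgrade $H$-equivariance of $T$ to $G$-equivariance of $\overline{T}$.
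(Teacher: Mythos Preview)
Your proof is correct and follows essentially the same route as the paper's: both verify the universal property directly by defining $\overline{T}$ on each summand $\rho_r F$ as $\rho'_r T(\rho_r^{-1}\,\cdot\,)$ and extending linearly. The only cosmetic difference is that the paper phrases the $G$-equivariance step as independence of the choice of coset representative (so that $sr$ may itself be used as a representative), whereas you fix $R$ and track the decomposition $sr = r_s h_s$ explicitly; these are the same computation, and your more explicit treatment of positivity via the order direct sum is exactly what underlies the paper's terse ``all mappings involved are positive''.
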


\begin{proof}
Let $(E', \rho')$ be another ordered $G$-module, and let $T: F \to E'$ be a positive linear map such that $T (\theta_t x) = \rho'_t T(x)$ for all $t \in H$ and $x \in F$. We have to show that there exists a unique positive linear map $\overline{T}: E \to E'$ extending $T$ and satisfying $\overline{T} \circ \rho_s = \rho_s' \circ \overline{T}$ for all $s \in G$.

We follow the proof of \cite[Lemma~3.3.1]{serre}. If $\overline{T}$ satisfies these conditions, and if $x \in \rho_r F$ for $r \in R$, then $\rho_r^{-1} x \in F$, and so
\[ \overline{T}(x) = \overline{T}(\rho_r \rho_r^{-1} x) = \rho_r' \overline{T}(\rho_r^{-1} x) = \rho_r' T(\rho_r^{-1} x). \]
This formula determines $\overline{T}$ on $\rho_r F$, hence on $E$ since it is the direct sum of the $\rho_r F$. This proves the uniqueness of $\overline{T}$.

For the existence, let $x \in \rho_r F$, then we define $\overline{T}(x) := \rho_r' T(\rho_r^{-1} x)$ as above. This does not depend on the choice of representative $r$, since if we replace $r$ by $rt$ with $t \in H$, we have
\[ \rho'_{rt} T (\rho^{-1}_{rt} x) = \rho_r' \rho_t' T(\theta_t^{-1} \rho_r^{-1} x) = \rho_r' T(\theta_t \theta_t^{-1} \rho_r^{-1} x) = \rho_r' T(\rho_r^{-1} x). \]
Since $E$ is the direct sum of the $\rho_r F$, there exists a unique linear map $\overline{T} \colon E \to E'$ which extends the partial mappings thus defined on the $\rho_r F$. It is easily verified that $\overline{T} \rho_s = \rho_s' T$ for all $s \in G$. Since all mappings involved are positive, $\overline{T}$ is positive as well.
\end{proof}

\begin{corollary}
 Let $F$ be an ordered $H$-module. Then the induced ordered module $(\Ind_H^G(F),j)$ of $F$ from $H$ to $G$ exists, and $\Ind_H^G(F)$ is unique up to isomorphism of ordered $G$-modules. The positive map $j$ is actually an injective lattice homomorphism of the ordered $H$-module $F$ into the ordered $H$-module $\Ind_H^G(F)$. Moreover, if $E$ is finite dimensional and $H$ has finite index $|G : H|$, then $\textup{dim}(\Ind_H^G(F)) = |G : H|\, \textup{dim}(E)$.
\end{corollary}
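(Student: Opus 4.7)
The plan is to assemble the statement from the machinery already in place, using the explicit construction $\tilde E$, $E_S$, $E$, and $j$ given in equations \eqref{e:definition_induced_space}--\eqref{e:induced_direct_sum} together with Lemma~\ref{l:induced_module} for existence and Lemma~\ref{l:induced_unique} for uniqueness. Concretely, existence amounts to checking that the triple $(E,j(F),j)$ verifies the hypotheses of Lemma~\ref{l:induced_module}: the subspace $j(F) \subset E$ must be a Riesz subspace invariant under $\theta = \rho|_H$, and $E$ must be the order direct sum $\bigoplus_{r \in R} \rho_r\, j(F)$. The invariance and the order direct sum decomposition are exactly the content of the relations $\rho_r E_H = E_{rH}$ and $E = \bigoplus_{r \in R} E_{rH}$ already established in the preparatory discussion, once we identify $j(F)=E_H$. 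Applying Lemma~\ref{l:induced_module} to $(E, j(F))$ then produces an induced ordered module, and composing with the isomorphism $j \colon F \to E_H=j(F)$ of ordered $H$-modules turns $(E,j)$ itself into an induced ordered module of $F$. Uniqueness up to isomorphism of ordered $G$-modules is then automatic from Lemma~\ref{l:induced_unique}.

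The remaining issue is the assertion that $j$ is an injective lattice homomorphism from the ordered $H$-module $F$ into $\tilde E$ (and hence into $\Ind_H^G(F)=E$). Here I would argue pointwise: for $t \in H$, $j(x \vee y)(t) = \theta_{t^{-1}}(x \vee y) = \theta_{t^{-1}}x \vee \theta_{t^{-1}}y = j(x)(t) \vee j(y)(t)$ since $\theta_{t^{-1}}$ is a lattice automorphism of $F$, while both sides vanish at $t \notin H$; so $j(x\vee y)=j(x)\vee j(y)$ because the lattice operations on $\tilde E$ are pointwise. Linearity of $j$ is similar, and injectivity follows from the value $j(x)(e)=x$. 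The intertwining relation $j \circ \theta_t = \rho_t \circ j$ for $t \in H$ is a direct computation from the defining formulas.

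Finally, for the dimension statement (reading the ``$E$'' in the final clause as $F$, since that is the module being induced), the decomposition \eqref{e:induced_direct_sum} expresses $\Ind_H^G(F)$ as an order direct sum indexed by $R$, whose cardinality is exactly $|G:H|$. Each summand $\rho_r E_H$ is lattice isomorphic, via $\rho_r$, to $E_H$, which is in turn lattice isomorphic to $F$ through $j$. Hence $\dim \Ind_H^G(F) = |R| \cdot \dim F = |G:H|\cdot \dim F$, finishing the proof.

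The only real content is the application of Lemma~\ref{l:induced_module}; the potential pitfall is purely bookkeeping, namely making sure that the ``intrinsic'' $F$ appearing in the universal property is consistently identified with the subspace $j(F)=E_H$ of the concrete model, so that the universal property for $(E,j(F))$ supplied by Lemma~\ref{l:induced_module} transfers verbatim to $(E,j)$ as a pair with $j$ defined on $F$ itself.
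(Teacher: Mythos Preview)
Your proposal is correct and follows essentially the same route as the paper: existence via the explicit construction together with Lemma~\ref{l:induced_module}, uniqueness via Lemma~\ref{l:induced_unique}, the lattice-homomorphism property of $j$ from the preparatory discussion (which already records that $j\colon F\to E_H$ is an isomorphism of ordered $H$-modules), and the dimension count from the decomposition \eqref{e:induced_direct_sum}. You simply spell out in more detail what the paper compresses into a few citations, and you correctly flag the typographical slip ($E$ for $F$) in the dimension clause.
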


\begin{proof}
 The existence follows from Lemma~\ref{l:induced_module} and the construction preceding it, which also shows that $j$ has the property as described. The uniqueness follows from Lemma~\ref{l:induced_unique}. The last statement follows from \eqref{e:induced_direct_sum}.
\end{proof}

We continue with some properties of the induced positive representation.

\begin{proposition}\label{p:induced_irreducible_implies_irreducible}
 Let $\theta$ be a positive representation of a subgroup $H \subset G$. If $\Ind^G_H(\theta)$ is either band irreducible, or ideal irreducible, or projection band irreducible, then so is $\theta$.
\end{proposition}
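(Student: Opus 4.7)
I would prove the statement by contraposition. The strategy is to start with a nontrivial invariant object of the appropriate type inside $F$, and lift it to a nontrivial invariant object of the same type inside the concrete model $E = \bigoplus_{r \in R} \rho_r F \subset \tilde E$ of $\Ind_H^G(F)$ constructed in \eqref{e:induced_direct_sum}. Throughout I identify $F$ with its image $j(F) = E_H \subset E$, so that the restricted representation of $H$ on $F$ agrees with $\rho|_H$ acting on $E_H$.

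Suppose then that $B\subset F$ is a nonzero, proper $\theta$-invariant band (resp.\ ideal, resp.\ projection band). I would consider
\[
B' := \bigoplus_{r \in R} \rho_r B \;\subset\; \bigoplus_{r \in R} \rho_r F = E.
\]
The first key step is $G$-invariance of $B'$. For $s \in G$ and $r \in R$, write $sr = r't$ with $r' \in R$ and $t \in H$; then $\rho_s(\rho_r B) = \rho_{r'}\rho_t B = \rho_{r'} B$, since $B$ is $H$-invariant and $\rho|_H = \theta$. Hence $\rho_s$ permutes the summands of $B'$, so $B'$ is $\Ind_H^G(\theta)$-invariant.

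The second step is to verify that $B'$ has the right order-theoretic nature. Since each $\rho_r$ is a lattice automorphism of $E$, $\rho_r B$ is a band (resp.\ ideal, resp.\ projection band) inside $\rho_r F$. In an order direct sum $\bigoplus_r E_r$, the lattice operations and order are computed coordinatewise, and a subspace of the form $\bigoplus_r C_r$ with $C_r$ a band (resp.\ ideal, resp.\ projection band) of $E_r$ is itself a band (resp.\ ideal, resp.\ projection band) of $\bigoplus_r E_r$; for the projection band case one notes that the disjoint-complement decompositions $\rho_r F = \rho_r B \oplus \rho_r B^d$ assemble into the order direct sum decomposition $E = B' \oplus \bigl(\bigoplus_r \rho_r B^d\bigr)$. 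These are routine verifications that I would not spell out in detail.

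Finally, nontriviality: since $e \in R$ may be chosen and $\rho_e B = B \neq 0$, one has $B' \neq 0$; and since $B \subsetneq F$, we get $\rho_e B \subsetneq \rho_e F$, so that, using the directness of the sum, $B' \subsetneq E$. This produces a nontrivial invariant band (resp.\ ideal, resp.\ projection band) in $\Ind_H^G(F)$ of the same type as the one assumed in $F$, giving the required contrapositive in all three cases. There is no real obstacle here; the only point to handle with care is that the construction of $B'$ simultaneously preserves all three order-theoretic properties, which follows uniformly from the fact that each $\rho_r$ is a lattice isomorphism and that the sum $E = \bigoplus_r \rho_r F$ is an order direct sum.
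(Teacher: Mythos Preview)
Your proof is correct and follows the same contrapositive strategy as the paper: start with a nontrivial $\theta$-invariant object $B\subset F$ and lift it to a nontrivial $\Ind_H^G(\theta)$-invariant object of the same type in $E$. The paper works in the function-space model and describes the lifted object as $\{f\in E: f(G)\subset B\}$, which under the identification $F\cong E_H$ is exactly your $\bigoplus_{r\in R}\rho_r B$; so the two arguments construct the same subspace, just viewed through different coordinates.
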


\begin{proof}
We will prove this for ideals, the other cases are identical. Let $E$ be as in \eqref{e:definition_induced_space}, \eqref{e:definition_induced_space_ideal} and \eqref{e:induced_direct_sum}. Suppose $\theta$ is not ideal irreducible, so there exists a proper nontrivial $\theta$-invariant ideal $B \subset E_\theta$. Then $\{ f \in E: f(G) \subset B\} \subset E$ is a proper nontrivial $\Ind^G_H(\theta)$-invariant ideal, so $\Ind^G_H(\theta)$ is not ideal irreducible.
\end{proof}

\begin{proposition}[Induction in stages]\label{p:induction_in_stages}
Let $H \subset K \subset G$ be a chain of subgroups of $G$, and let $F$ be an ordered $H$-module. Then
\[ (\Ind_K^G(\Ind_H^K(F)), j_K^G \circ j_H^K) \]
is an induced ordered module of $F$ from $H$ to $G$.
\end{proposition}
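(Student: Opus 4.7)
The plan is to verify the universal property of Definition~\ref{d:def_induced_rep} directly, working abstractly from the two universal properties of the inner and outer induction rather than unpacking the concrete realisation from Lemma~\ref{l:induced_module}. First I would observe that $j_K^G \circ j_H^K$ is a legitimate candidate for the embedding map: $j_H^K$ is $H$-equivariant by construction, while $j_K^G$ is $K$-equivariant and hence, since $H\subset K$, also $H$-equivariant. Composition therefore yields a positive $H$-intertwiner $F \to \Ind_K^G(\Ind_H^K(F))$, which is the map that the defining diagram requires.

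Given an ordered $G$-module $E$ and a $T \in \Hom_H(F,E)$, I would regard $E$ as an ordered $K$-module by restriction of the $G$-action to $K$. The universal property of $(\Ind_H^K(F), j_H^K)$, applied to the positive $H$-intertwiner $T$, produces a unique $T' \in \Hom_K(\Ind_H^K(F), E)$ with $T = T' \circ j_H^K$. Viewing $E$ again as an ordered $G$-module and applying the universal property of $(\Ind_K^G(\Ind_H^K(F)), j_K^G)$ to the positive $K$-intertwiner $T'$ then yields a unique $\overline{T} \in \Hom_G(\Ind_K^G(\Ind_H^K(F)), E)$ with $T' = \overline{T} \circ j_K^G$. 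Chaining the two identities gives the required factorisation $T = \overline{T} \circ (j_K^G \circ j_H^K)$.

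For uniqueness, if $\widetilde{T} \in \Hom_G(\Ind_K^G(\Ind_H^K(F)), E)$ also satisfies $T = \widetilde{T} \circ j_K^G \circ j_H^K$, then $\widetilde{T} \circ j_K^G$ is a positive $K$-intertwiner from $\Ind_H^K(F)$ to $E$ (restricting the $G$-equivariance of $\widetilde{T}$ via $K \subset G$) whose composition with $j_H^K$ equals $T$; the uniqueness clause in the universal property of $\Ind_H^K(F)$ therefore forces $\widetilde{T} \circ j_K^G = T'$. A second appeal, this time to the uniqueness half of the universal property at the outer level, gives $\widetilde{T} = \overline{T}$. The argument is essentially a diagram chase, and I do not anticipate a real obstacle; the only point worth watching is the bookkeeping of equivariance groups along the chain $H \subset K \subset G$, which is immediate from the inclusions.
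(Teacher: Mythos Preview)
Your argument is correct and is essentially identical to the paper's proof: both obtain the factoring map by applying the two universal properties in sequence and establish uniqueness by peeling off the two uniqueness clauses in the same order. The only difference is cosmetic notation (the paper writes $\overline{T}$ and $\overline{\overline{T}}$ where you write $T'$ and $\overline{T}$) and your explicit remark that $j_K^G \circ j_H^K$ is an $H$-intertwiner, which the paper leaves implicit.
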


\begin{proof}
 Let $E$ be an ordered $G$-module. Consider the following diagram:
\[ \xymatrix{
 F \ar[r]^{j_H^K} \ar[rd]^{T} & \Ind_H^K(F) \ar[r]^{j_K^G} \ar[d]^{\overline{T}} & \Ind_K^G(\Ind_H^K(F)) \ar[ld]^{\overline{\overline{T}}} \\
 & E & }
\]
Here $\overline{T}$ is the unique positive map generated by $T$, and $\overline{\overline{T}}$ is the unique positive map generated by $\overline{T}$. Since the diagram is commutative, $T = \overline{\overline{T}} \circ (j_K^G \circ j_H^K)$. If $S$ is another positive map satisfying $T = S \circ (j_K^G \circ j_H^K)$, then $(S \circ j_K^G) \circ j_H^K = T = \overline{T} \circ j_H^K$, and so $S \circ j_K^G = \overline{T}$ by the uniqueness of $\overline{T}$. This in turn implies that $S = \overline{\overline{T}}$ by the uniqueness of $\overline{\overline{T}}$, and so $(\Ind_K^G(\Ind_H^K(F)), j_K^G \circ j_H^K)$ satisfies the universal property, as desired.
\end{proof}

\subsection{Frobenius reciprocity}

This subsection is concerned with the implications, or rather their absence, of the functorial formulation of Frobenius reciprocity for multiplicities of irreducible positive representations in induced ordered modules.

We start with the usual consequence of the categorical definition of the induced ordered module: induction from $H$ to $G$ is the left adjoint functor of restriction from $G$ to $H$, for an arbitrary group $G$ and subgroup $H$.

\begin{proposition}[Frobenius Reciprocity]\label{p:frobenius}
Let $F$ be an ordered $H$-module and let $E$ be an ordered $G$-module. Then there is a natural isomorphism of positive cones
\[ \Hom_H(F, E) \cong \Hom_G(\Ind_H^G(F), E). \]
\end{proposition}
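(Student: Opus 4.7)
The plan is to derive this directly from the universal property stated in Definition~\ref{d:def_induced_rep}, which is essentially a restatement of the bijection of positive cones we are trying to establish.

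First I would define candidate maps in both directions. Let $\Phi \colon \Hom_G(\Ind_H^G(F), E) \to \Hom_H(F, E)$ be given by $\Phi(S) := S \circ j$. This is well-defined: $j \in \Hom_H(F, \Ind_H^G(F))$, any $G$-intertwiner is in particular an $H$-intertwiner, and composition of positive maps is positive. Conversely, let $\Psi \colon \Hom_H(F, E) \to \Hom_G(\Ind_H^G(F), E)$ send $T$ to the unique positive $G$-intertwiner $\overline{T}$ supplied by the universal property of $(\Ind_H^G(F), j)$ applied to $E$ and $T$.

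Next I would verify that $\Phi$ and $\Psi$ are inverse to each other. For $T \in \Hom_H(F, E)$, the equation $\Phi(\Psi(T)) = \overline{T} \circ j = T$ is precisely the commutativity clause in the defining property of $\overline{T}$. For $S \in \Hom_G(\Ind_H^G(F), E)$, set $T := \Phi(S) = S \circ j$. Then $S$ is a positive $G$-intertwiner satisfying $S \circ j = T$; by the uniqueness clause of the universal property there is only one such map, hence $S = \overline{T} = \Psi(\Phi(S))$.

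Finally, naturality in both arguments is routine. For $\alpha \in \Hom_G(E, E')$, naturality in the second argument follows from associativity of composition, $\alpha \circ (S \circ j) = (\alpha \circ S) \circ j$. For $\beta \in \Hom_H(F', F)$, the universal property applied to $j \circ \beta \in \Hom_H(F', \Ind_H^G(F))$ produces a canonical positive $G$-intertwiner $\Ind_H^G(\beta) \colon \Ind_H^G(F') \to \Ind_H^G(F)$, and the corresponding naturality square commutes by the uniqueness clause applied to the two evident positive $H$-intertwiners $F' \to E$.

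I do not expect a real obstacle: the whole statement is a formal consequence of the universal property, and the only ordered-theoretic point to keep in mind is that positivity is preserved at every step, which it is since every map involved is positive and positivity is closed under composition.
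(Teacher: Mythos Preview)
Your proposal is correct and takes essentially the same approach as the paper: both derive the bijection directly from the universal property in Definition~\ref{d:def_induced_rep}, with existence coming from the commutativity clause and injectivity from the uniqueness clause. The only point the paper makes explicit that you leave implicit is compatibility with the cone structure---the paper checks $\overline{\lambda T + \mu S} = \lambda\overline{T} + \mu\overline{S}$ for $\lambda,\mu \geq 0$ via uniqueness---but in your formulation this is immediate since $\Phi(S) = S \circ j$ is manifestly additive and positively homogeneous in $S$; conversely, you spell out naturality, which the paper asserts without verification.
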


\begin{proof}
The existence of the natural bijection is an immediate consequence of the very definition of the induced module in Definition~\ref{d:def_induced_rep}. For $\lambda, \mu \geq 0$ and $T,S\in \Hom_H(F, E)$ we have $\overline{\lambda T + \mu S} = \lambda \overline{T} + \mu \overline{S}$ as a consequence of the uniqueness part of Definition~\ref{d:def_induced_rep} and the positivity of $\lambda \overline{T} + \mu \overline{S}$. Hence the two positive cones are isomorphic.
\end{proof}

Now suppose $G$ is a finite group. For finite dimensional positive Archimedean representations of $G$ we have a unique decomposition into irreducible positive representations, according to Corollary~\ref{c:decomposition_with_multiplicities}. Hence the notion of multiplicity is available, and if $\rho_1$ is a finite dimensional positive representation and $\rho_2$ a finite dimensional irreducible positive representation of $G$, we let $m(\rho_1, \rho_2)$ denote the number of times that a lattice isomorphic copy of $\rho_1$ occurs in the decomposition of $\rho_2$ of Corollary~\ref{c:decomposition_with_multiplicities}. Now Let $\theta$ be a finite dimensional irreducible positive representation of a subgroup $H \subset G$ and let $\rho$ be a finite dimensional irreducible positive representation of $G$. In view of the purely linear theory, cf. \cite[Proposition~21]{serre}, and its generalization to unitary representations of compact groups, cf. \cite[Theorem~5.9.2]{wolf}, it is natural to ask whether
\[ m(\rho, \Ind_H^G(\theta)) = m(\theta, \rho|_H) \]
still holds in our ordered context. In the linear theory, and also for compact groups, this follows from the fact that the dimensions of spaces of intertwining operators in the analogues of Proposition~\ref{p:frobenius} can be interpreted as a multiplicities. Since the sets in Proposition~\ref{p:frobenius} are cones and not vector spaces, and their elements are not even necessarily lattice homomorphisms, there seems little chance of success in our case. Indeed, Frobenius reciprocity  in terms of multiplicities does not hold for ordered modules, as is shown by the following counterexample.
We let $\theta \colon H=\{e\} \to \Aut(\R)$ be the trivial representation; then $\Ind_H^G(\theta)$ is the left regular representation of $G$ on the Riesz space with atomic basis $\{e_s\}_{s \in G}$. This set of basis elements has only one $G$-orbit, hence $\Ind_H^G(\theta)$ is band irreducible. Therefore, if  $\rho$ is an arbitrary irreducible positive representation of $G$, $m(\rho, \Ind_H^G(\theta))$ is at most one. On the other hand, $\rho|_H$ decomposes as $\dim \rho$ copies of the trivial representation $\theta$, so $m(\theta, \rho|_H) = \dim \rho$.

Another counterexample, where $H$ is nontrivial, can be obtained by taking $G = \Z / 4\Z$ and $H = \{0, 2\}$, and taking $\theta$ and $\rho$ to be the left regular representation of $H$ and $G$, respectively. Then these are irreducible positive representations of the respective groups, and it can be verified that $\Ind_H^G(\theta) \cong \rho$, so $m(\rho, \Ind_H^G(\theta)) = 1$, but $\rho|_H \cong \theta \oplus \theta$, so $m(\theta, \rho|_H) = 2$.

\subsection{Systems of imprimitivity}

In this final subsection, we consider systems of imprimitivity in the ordered setting. As before, $G$ is an arbitrary group and $H \subset G$ an arbitrary subgroup. We start with an elementary lemma, which is easily verified.

\begin{lemma}
 Let $E$ and $F$ be Riesz spaces and let $T \colon E \to F$ be a lattice isomorphism. If $B \subset E$ is a projection band in $E$, then $TB$ is a projection band in $F$, and the corresponding band projections are related by $P_{TB} = T P_B T^{-1}$.
\end{lemma}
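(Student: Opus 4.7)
The plan is to exploit the fact that a lattice isomorphism preserves disjointness, and hence carries disjoint decompositions to disjoint decompositions. Concretely, I will first show that $T(B^d) = (TB)^d$, then deduce the projection band structure on $F$ by transporting the direct sum $E = B \oplus B^d$, and finally compute $P_{TB}$ on a general element by decomposing its preimage under $T$.

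First I would verify that $T$ preserves disjointness. Since $T$ is a lattice isomorphism, for $x,y \in E$ we have $|Tx| \wedge |Ty| = T(|x| \wedge |y|)$, so $x \perp y$ if and only if $Tx \perp Ty$. This immediately gives $T(B^d) = (TB)^d$: an element $Tx \in F$ lies in $(TB)^d$ iff $Tx$ is disjoint from every $Tb$ with $b \in B$, iff $x \in B^d$.

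Next, since $B$ is a projection band, $E = B \oplus B^d$ as an order direct sum. Applying the lattice isomorphism $T$ yields
\[ F = T(E) = T(B) \oplus T(B^d) = TB \oplus (TB)^d, \]
which shows that $TB$ is a projection band in $F$.

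Finally, for the formula for the band projection, let $y \in F$ and write $y = Tx$ with $x = T^{-1}y \in E$. Decompose $x = P_B x + (x - P_B x)$ with $P_B x \in B$ and $x - P_B x \in B^d$. Applying $T$ gives $y = T(P_B x) + T(x - P_B x)$ with $T(P_B x) \in TB$ and $T(x - P_B x) \in T(B^d) = (TB)^d$. By uniqueness of the decomposition along $F = TB \oplus (TB)^d$, we conclude $P_{TB}(y) = T(P_B(T^{-1}y))$, i.e., $P_{TB} = T P_B T^{-1}$. There is no real obstacle here; the only subtlety worth flagging is that one is using the lattice (not merely linear) nature of $T$ precisely in the disjointness-preservation step, without which the image of a band need not be a band at all.
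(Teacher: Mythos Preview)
Your proof is correct. The paper itself does not give a proof of this lemma at all; it simply states that the result ``is easily verified,'' so your argument is exactly the kind of routine verification the authors had in mind, carried out via the disjointness-preservation property of lattice isomorphisms and transport of the order direct sum.
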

%

Let $\rho \colon G \to \Aut(E)$ be a positive representation. Suppose there exists a $G$-space $\Gamma$, and a family of Riesz subspaces $\{E_\gamma\}_{\gamma \in \Gamma}$, such that $E = \oplus_{\gamma \in \Gamma} E_\gamma$ as an order direct sum and $\rho_s E_\gamma = E_{s \gamma}$ for all $s \in G$. Then we call the family $\{E_\gamma\}_{\gamma \in \Gamma}$ an \emph{ordered system of imprimitivity} for $\rho$. If $A \subset \Gamma$, then the order decomposition
\[ E = \left( \bigoplus_{\gamma \in A} E_\gamma \right) \oplus \left(\bigoplus_{\gamma \in A^c} E_\gamma \right) \]
implies that $\oplus_{\gamma \in A} E_\gamma$ is a projection band. In particular, $E_\gamma$ is a projection band for all $\gamma \in \Gamma$. For a subset $A \subset \Gamma$, let $P(A)$ denote the band projection $P_{\oplus_{\gamma \in A} E_\gamma}$ onto $\oplus_{\gamma \in A} E_\gamma$. Then the assignment $A \mapsto P(A)$ is a band projection valued map satisfying
\begin{equation}\label{e:projection_valued_map}
 P \left( \bigcup_{i \in I} A_i \right)x = \sup_{i \in I} P(A_i)x 
\end{equation}
for all $x \in E^+$ and all collections of subsets $\{A_i\}_{i \in I} \subset \Gamma$. An equivalent formulation of \eqref{e:projection_valued_map} is $P(\sup_i A_i) = \sup_i P(A_i)$, where the first supremum is taken in the partially ordered set of subsets of $\Gamma$, and the second supremum is taken in the partially ordered space of regular operators on $E$. Either formulation is the ordered analogue of a strongly $\sigma$-additive spectral measure. Furthermore, the map $P$ is covariant in the sense that, for $s \in G$,
\[ P( s A) = P_{\oplus_{\gamma \in A} E_{s \gamma}}= P_{\rho_s \oplus_{\gamma \in A} E_\gamma} = \rho_s \left( P_{\oplus_{\gamma \in A} E_\gamma} \right) \rho_s^{-1} = \rho_s P(A) \rho_s^{-1}, \]
where the above lemma is used in the penultimate step. Every positive representation admits a system of imprimitivity where $\Gamma$ has exactly one element, and such a system of imprimitivity will be called trivial. A system of imprimitivity is called transitive if the action of $G$ on $\Gamma$ is transitive.

\begin{defn}
 A positive representation $\rho$ is called \emph{primitive} if it admits only the trivial ordered system of imprimitivity.
\end{defn}

Every decomposition of $E$ into $\rho$-invariant projection bands corresponds to an ordered system of imprimitivity where the action of $G$ on $\Gamma$ is trivial, so $\rho$ is projection band irreducible if and only if $\rho$ admits no nontrivial ordered system of imprimitivity with a trivial action. Hence a primitive positive representation is projection band irreducible, i.e., order indecomposable.

\begin{theorem}[Imprimitivity Theorem]\label{t:imprimitivity_theorem}
 Let $\rho$ be a positive representation of $G$. The following are equivalent:
\begin{enumerate}
 \item $\rho$ admits a nontrivial ordered transitive system of imprimitivity;
 \item There exists a proper subgroup $H \subset G$ and a positive representation $\theta$ of $H$ such that $\rho$ is order equivalent to $\Ind_H^G(\theta)$.
\end{enumerate}
\end{theorem}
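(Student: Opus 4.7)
My plan is to prove the two implications using Lemma~\ref{l:induced_module}, which characterizes when a concrete ordered $G$-module realizes the induced module, together with the fact that a transitive $G$-space is isomorphic to $G/H$ for $H$ the stabilizer of any point (Lemma~\ref{t:characterization_transitive_actions}).

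For the implication (ii) $\Rightarrow$ (i), I would take the concrete model of $\Ind_H^G(\theta)$ given by \eqref{e:induced_direct_sum}, so that $E = \bigoplus_{r \in R} \rho_r F$ as an order direct sum with $\rho_s E_{rH} = E_{srH}$. Then $\Gamma := G/H$ together with the family $\{E_{rH}\}_{rH \in G/H}$ is a transitive ordered system of imprimitivity for $\Ind_H^G(\theta)$, and it is nontrivial because $H$ is proper. Transporting this along the order equivalence between $\rho$ and $\Ind_H^G(\theta)$, using the elementary lemma about lattice isomorphisms preserving projection bands, yields a nontrivial transitive ordered system of imprimitivity for $\rho$.

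For the harder direction (i) $\Rightarrow$ (ii), suppose $\rho$ admits a nontrivial ordered transitive system of imprimitivity $\{E_\gamma\}_{\gamma \in \Gamma}$ on $(E,\rho)$. Fix $\gamma_0 \in \Gamma$ and let $H := G_{\gamma_0}$ be its stabilizer; since $|\Gamma| \geq 2$ and the action is transitive, $H$ is a proper subgroup of $G$. Put $F := E_{\gamma_0}$. For $t \in H$, one has $\rho_t F = \rho_t E_{\gamma_0} = E_{t\gamma_0} = E_{\gamma_0} = F$, so $F$ is invariant under $\theta := \rho|_H$ and is a Riesz subspace of $E$, giving an ordered $H$-module structure on $F$.

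Next I would identify the decomposition \eqref{e:induced_direct_sum}. Let $R$ be a system of representatives of $G/H$. The map $rH \mapsto r\gamma_0$ is a bijection $G/H \to \Gamma$ by transitivity, and since $\rho_r F = \rho_r E_{\gamma_0} = E_{r\gamma_0}$, the defining property of the system of imprimitivity gives
\begin{equation*}
E \;=\; \bigoplus_{\gamma \in \Gamma} E_\gamma \;=\; \bigoplus_{r \in R} E_{r\gamma_0} \;=\; \bigoplus_{r \in R} \rho_r F
\end{equation*}
as an order direct sum. The hypotheses of Lemma~\ref{l:induced_module} are therefore satisfied with the inclusion $j \colon F \hookrightarrow E$, so $(E, j)$ is an induced ordered module of $(F, \theta)$ from $H$ to $G$. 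By the uniqueness clause of Lemma~\ref{l:induced_unique}, $\rho$ is order equivalent to $\Ind_H^G(\theta)$, which completes the argument.

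The main technical point is the very first step of (i) $\Rightarrow$ (ii): recognizing that the stabilizer $H = G_{\gamma_0}$ is the right subgroup and that the orbit description of $\Gamma$ matches the coset decomposition of the concrete induced module exactly. Once this bookkeeping is in place, Lemma~\ref{l:induced_module} does the heavy lifting, and no further nontrivial Riesz-theoretic input is needed.
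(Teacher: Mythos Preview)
Your argument is correct and follows essentially the same route as the paper's proof: for (ii) $\Rightarrow$ (i) you both use the concrete model \eqref{e:induced_direct_sum} with $\Gamma=G/H$, and for (i) $\Rightarrow$ (ii) you both take $H$ to be the stabilizer of a point, set $F=E_{\gamma_0}$, verify the order direct sum $E=\bigoplus_{r\in R}\rho_r F$, and invoke Lemma~\ref{l:induced_module}. The only cosmetic difference is that you make the transport along the order equivalence in (ii) $\Rightarrow$ (i) and the appeal to Lemma~\ref{l:induced_unique} in (i) $\Rightarrow$ (ii) explicit, whereas the paper leaves these implicit.
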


\begin{proof}
 $(ii) \Rightarrow (i)$: Suppose that $\rho$ is order equivalent to $\Ind_H^G(\theta)$. Let $\Gamma$ be the transitive $G$-space $G / H$, which is nontrivial because $H$ is proper, and consider the spaces $E$ and $\{E_{sH}\}_{sH \in \Gamma}$ defined in \eqref{e:definition_induced_space}, \eqref{e:definition_induced_space_ideal} and \eqref{e:induced_direct_sum}. By the discussion following these definitions, $E = \oplus_{sH \in \Gamma} E_{sH}$, and $\rho_u E_{sH} = E_{usH}$, so this defines a nontrivial transitive system of imprimitivity.

$(i) \Rightarrow (ii)$: Suppose $\rho$ admits a nontrivial transitive ordered system of imprimitivity. Then by Theorem~\ref{t:characterization_transitive_actions} we may assume $\Gamma = G / H$ for some subgroup $H$, which must be proper since $\Gamma$ is nontrivial. Choose a system of representatives $R$ of $G / H$, then we may assume $\Gamma = R$. Assume that the representative of $H$ in $R$ is $e$. We have that $E = \oplus_{r \in R} E_r$, and if $t \in H$, then $t$ acts trivially on $e \in R$, so $\rho_t E_e = E_e$. Therefore we can define $\theta \colon H \to \Aut(E_e)$ by restricting $\rho$ to $H$ and letting it act on $E_e$. Then by the definition of the system of imprimitivity $\{E_r\}_{r \in R}$ we have $\rho_r E_e = E_r$, and so
\[ \bigoplus_{r \in R} \rho_r E_e = \bigoplus_{r \in R} E_r = E, \]
which implies by Lemma~\ref{l:induced_module} that $\rho$ is induced by $\theta$.
\end{proof}

\begin{corollary}\label{c:induced_by_primitive}
All projection band irreducible positive representations of a finite group $G$ are induced by primitive positive representations.
\end{corollary}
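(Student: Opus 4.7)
The plan is to argue by induction on the order $|G|$, using the Imprimitivity Theorem (Theorem~\ref{t:imprimitivity_theorem}), induction in stages (Proposition~\ref{p:induction_in_stages}), and the fact that irreducibility descends under induction (Proposition~\ref{p:induced_irreducible_implies_irreducible}).

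For the base case $|G| = 1$, the only $G$-space is a singleton, so every positive representation admits only the trivial ordered system of imprimitivity and is therefore primitive; hence $\rho$ is trivially induced by itself (taking $H = G$ in the inductive construction, or just noting primitivity directly).

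For the inductive step, assume the result for all finite groups of order strictly less than $|G|$, and let $\rho$ be a projection band irreducible positive representation of $G$. If $\rho$ is itself primitive, there is nothing to do: $\rho$ is induced by the primitive representation $\rho$ of $G \subset G$. Otherwise $\rho$ admits a nontrivial ordered system of imprimitivity, which we may refine to a transitive one by restricting to a single orbit in $\Gamma$ (and the resulting subsystem is still nontrivial, for if the orbit were a singleton then $\rho$ would decompose as a nontrivial $G$-invariant order direct sum, contradicting projection band irreducibility via Lemma~\ref{l:projband_equiv_order_irr}). Applying the Imprimitivity Theorem to this transitive nontrivial system yields a proper subgroup $H \subsetneq G$ and a positive representation $\theta$ of $H$ with $\rho$ order equivalent to $\Ind_H^G(\theta)$.

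By Proposition~\ref{p:induced_irreducible_implies_irreducible}, the projection band irreducibility of $\Ind_H^G(\theta) \cong \rho$ implies that $\theta$ is projection band irreducible as a positive representation of $H$. Since $H$ is proper and $G$ is finite, $|H| < |G|$, so by the inductive hypothesis there exist a subgroup $K \subset H$ and a primitive positive representation $\sigma$ of $K$ such that $\theta$ is order equivalent to $\Ind_K^H(\sigma)$. Induction in stages (Proposition~\ref{p:induction_in_stages}) then gives
\[ \rho \;\cong\; \Ind_H^G(\theta) \;\cong\; \Ind_H^G(\Ind_K^H(\sigma)) \;\cong\; \Ind_K^G(\sigma), \]
completing the induction.

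The only step requiring a small amount of care is the passage from a nontrivial ordered system of imprimitivity to a nontrivial \emph{transitive} one; all other steps are direct invocations of the earlier results. Note also that the trivial "base case'' $H = G$ with $\rho$ already primitive must be explicitly admitted in the statement, since otherwise a primitive representation of $G$ itself would not be covered.
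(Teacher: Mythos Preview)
Your proof is correct and follows essentially the same route as the paper: use the Imprimitivity Theorem to pass to a proper subgroup, invoke Proposition~\ref{p:induced_irreducible_implies_irreducible} to retain projection band irreducibility, and conclude via induction in stages. The paper iterates this descent informally (``keep repeating the process''), whereas you package it as an induction on $|G|$; these are equivalent.

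One small point of exposition: your phrase ``refine to a transitive one by restricting to a single orbit'' glosses over why the restricted system is a system of imprimitivity for $\rho$ on \emph{all} of $E$ rather than on a proper sub-band. The paper handles this more directly: for each orbit $O \subset \Gamma$ the sum $\bigoplus_{\gamma \in O} E_\gamma$ is a $\rho$-invariant projection band, so projection band irreducibility forces $\Gamma$ to have a single orbit, i.e., the original nontrivial system is already transitive. Your parenthetical about singleton orbits is then the observation that this unique orbit has more than one element. The content is the same; the paper's formulation just makes the role of irreducibility in forcing transitivity explicit in one step rather than two.
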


\begin{proof}
 Let $\rho$ be a projection band irreducible representation. If $\rho$ is primitive we are done, so assume it is not primitive. Then there exists a nontrivial ordered system of imprimitivity $\{E_\gamma\}_{\gamma \in \Gamma}$. Then for each orbit in $\Gamma$, the direct sum of $E_\gamma$, where $\gamma$ runs through the orbit, is a $\rho$-invariant projection band. Since $\rho$ is projection band irreducible, this implies that $\Gamma$ is transitive. Therefore, by the Imprimitivity Theorem~\ref{t:imprimitivity_theorem}, $\rho$ is induced by a positive representation of a proper subgroup of $G$, which is projection band irreducible by Proposition~\ref{p:induced_irreducible_implies_irreducible}. If this representation is primitive we are done, and if not we keep repeating the process until a representation is induced by a primitive positive representation. Then by Proposition~\ref{p:induction_in_stages} the representation $\rho$ is induced by this primitive positive representation as well.
\end{proof}

We note that, in the above corollary, the representations need not be finite dimensional, and that it trivially implies a similar statement for band irreducible and ideal irreducible positive representations of a finite group.

\subsection*{Acknowledgements}
The authors thank Lenny Taelman for helpful comments and suggestions.

\bibliographystyle{amsplain}
\bibliography{de_jeu_wortel_finite_groupsbib}

\end{document}